\newtheorem{lemma}{Lemma}[section]
\newtheorem{theorem}[lemma]{Theorem}
\theoremstyle{definition}
\newtheorem*{ack}{Acknowledgements}
\newtheorem{remark}[lemma]{Remark}
\newcommand{\R}{\mathbb{R}}
\newcommand{\C}{\mathbb{C}}
\newcommand{\Z}{\mathbb{Z}}
\renewcommand{\S}{\mathbb{S}}
\newcommand{\ep}{\varepsilon}
\newcommand{\End}{\operatorname{End}}
\newcommand{\tl}{\tilde}
\newcommand{\wt}{\widetilde}
\newcommand{\J}{\mathcal{J}}
\newcommand{\X}{X_{\lambda}}
\newcommand{\dt}{\partial_{t}}
\newcommand{\dth}{\partial_{\theta}}
\newcommand{\dph}{\partial_{\phi}}
\newcommand{\drh}{\partial_{\rho}}
\newcommand{\ds}{\partial_{s}}
\def\bp#1{\left(#1\right)}
\def\br#1{\left\{#1\right\}}
\def\abs#1{\left|#1\right|}
\begin{document}
\title[Nonunique asymptotic limit]{Finite-energy pseudoholomorphic planes with multiple asymptotic limits}
\author[R.\ Siefring]{Richard Siefring}
\address{Fakult\"at f\"ur Mathematik \\ Ruhr-Universit\"at Bochum \\
     44780 Bochum \\
    Germany}
\urladdr{\url{http://homepage.ruhr-uni-bochum.de/richard.siefring}}
\email{\href{mailto:richard.siefring@ruhr-uni-bochum.de}{richard.siefring@ruhr-uni-bochum.de}}
\date{September 30, 2016}

\dedicatory{
Dedicated to Helmut Hofer on the occasion of his 60th birthday,
and in warm remembrance of Kris Wysocki.
}

\begin{abstract}
It's known from \cite{hwz:prop1, hwz:prop4, bourgeois} that 
in a contact manifold equipped with either a nondegenerate or Morse-Bott
contact form,
a finite-energy pseudoholomorphic curve will 
be asymptotic
at each of its nonremovable punctures
to a single periodic orbit of the Reeb vector field and
that the convergence is exponential.
We provide examples here to show that this need not be the case
if the contact form is degenerate.
More specifically, 
we show that on any contact manifold $(M, \xi)$ with cooriented contact structure
one can choose a contact form $\lambda$ with $\ker\lambda=\xi$
and a compatible complex structure $J$ on $\xi$
so that for the associated $\R$-invariant almost complex structure $\tl J$ on $\R\times M$
there exist families of embedded finite-energy $\tl J$-holomorphic cylinders and planes having
embedded tori as limit sets.
\end{abstract}

\maketitle

\tableofcontents


\section{Introduction and statement of results}
The study of punctured pseudoholomorphic curves in symplectizations of contact manifolds
was introduced by Hofer in \cite{hofer93}.
Specifically, considering a contact manifold $(M, \xi=\ker\lambda)$,
Hofer introduced a class of
$\R$-invariant almost complex structures
and a notion of energy for a pseudoholomorphic map $\tl u=(a, u):\C\to \R\times M$ 
and showed that if the energy of a pseudoholomorphic plane is finite, 
then
there are sequences
$s_{k}\to\infty$
so that the sequence of loops
\[
t\in \S^{1}\approx \R/\Z \mapsto u(e^{2\pi(s_{k}+it)})
\]
converge in $C^{\infty}(\S^{1}, M)$ to a periodic orbit $\gamma$ of the Reeb vector field of the contact form $\lambda$.

In \cite[Theorem 1.2/1.3]{hwz:prop1}, Hofer, Wysocki, and Zehnder further show
that if the periodic orbit $\gamma$ is nondegenerate, then the maps
$u(s):\S^{1}\to M$ defined by $u(s)(t)=u(e^{s+it})$ satisfy
\[
\lim_{s\to\infty}u(s)=\gamma \text{ in $C^{\infty}(\S^{1}, M)$} 
\]
and in fact the convergence is exponential
\cite[Theorem 1.4]{hwz:prop1}.
There, immediately following the statement of
Theorem 1.2,
the authors mention that they expect this need not be the case
in the event that the periodic orbit $\gamma$ is degenerate, but that they didn't know of an
explicit example.
To date no examples have appeared in the literature, and whether or not it is possible for a
finite-energy plane to have multiple periodic orbits as asymptotic limits has remained an open
question.\footnote{In fact, a claimed proof that
no such examples exist has appeared in a recent (now-withdrawn) arXiv preprint.}
We present some examples here.  The examples we construct can be localized to any arbitrarily small
neighborhood of a standard model of a transverse knot and since transverse knots exist in abundance in any
contact manifold, we can prove the following.

\begin{theorem}\label{t:main-theorem}
Let $(M, \xi)$ be a contact manifold.  Then there exists a contact form $\lambda$ on $M$
and a compatible complex structure $J$ on $\xi$ so that there exist finite-energy pseudoholomorphic planes and
cylinders for the data $(\lambda, J)$ whose limit sets have image diffeomorphic to the $2$-torus.
\end{theorem}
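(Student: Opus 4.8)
The plan is to reduce the assertion to an explicit local model near a transverse knot and then to build the curves essentially by hand. Any contact manifold contains transverse knots in abundance (e.g.\ a transverse push-off of an arbitrary embedded circle, or a closed braid about a transverse unknot), and by the standard neighborhood theorem a tubular neighborhood of such a knot $K$ is strictly contactomorphic to a standard model $\S^1\times D^{2n}$; passing to a $3$-dimensional contact sub-slice $\S^1\times D^2$ and extending the data suitably, it suffices to work there, with $\lambda_0=d\theta+\tfrac{1}{2}r^2\,d\phi$ in coordinates $(\theta,(r,\phi))$, $\theta\in\R/\Z$, $(r,\phi)$ polar on the disk. So it is enough to construct on $\S^1\times D^2$ a contact form $\lambda$ with $\ker\lambda=\ker\lambda_0$ that agrees with $\lambda_0$ near $\S^1\times\partial D^2$, together with a compatible complex structure $J$ on $\ker\lambda_0$ standard near the boundary, such that the associated $\R$-invariant $\tilde J$ on $\R\times\S^1\times D^2$ carries finite-energy $\tilde J$-holomorphic planes and cylinders, contained in this region, whose limit sets are embedded $2$-tori; gluing in the ambient data outside the neighborhood then proves the theorem.

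Next, the choice of data. Since $\ker\lambda=\ker\lambda_0$ forces $\lambda=f\lambda_0$ for a positive function $f$ (with $f\equiv1$ near $r=0$ and near $\partial D^2$, so $\lambda$ extends), the only freedom is $f$ and $J$. I would pick $f$ so that the Reeb field $X_\lambda$ admits an invariant embedded $2$-torus $T\subset\S^1\times D^2$ — a graph over some $\{r=r_0\}$ — foliated by periodic Reeb orbits and \emph{degenerate} in a way not covered by the hypotheses of \cite{hwz:prop1} (no orbit on $T$ is nondegenerate) nor of \cite{bourgeois} ($T$ is not a Morse--Bott family in the usual sense): concretely one degenerates the transverse Reeb rotation function and/or the periods of the orbits foliating $T$ along $T$, so that the orbits' linearized return maps are unipotent with nontrivial shear (equivalently, their asymptotic operators have a zero eigenvalue whose monodromy is not semisimple). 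The compatible $J$ is then taken with its remaining functional parameters left free, to be fixed when solving for the curves.

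The key step is constructing the curves. Using $\R$-translation on the symplectization together with whatever part of the $\S^1\times\S^1$ rotational symmetry of $\lambda_0$ is preserved by $f$ and $J$, one reduces the Cauchy--Riemann equation to a low-dimensional system in the radial coordinate and the angular coordinates along $T$, and looks for solutions whose radial coordinate tends to $r_0$ while the phase along $T$ drifts \emph{monotonically and without limit}: the end of the cylinder spirals onto $T$ while sliding again and again around the circle of orbits foliating $T$. The degeneracy of $T$ is exactly what allows this — it rules out the exponential radial decay that would force (by \cite{hwz:prop1,bourgeois}) convergence to a single orbit, and permits instead a slow decay whose companion phase-drift integral diverges; in suitable end coordinates the equation reads schematically $\bar\partial w=g(s,t,w)$ with $g$ of higher order, and the relevant solutions have $|w|$ of size $c/s$ with $\arg w$ a divergent function of $s$. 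One then checks that the Hofer energy is finite (the $d\lambda$-energy is a bounded monotone integral, the $\lambda$-energy is controlled by the asymptotic torus), that the curves are embedded (immersed from the explicit form, injective from the asymptotic estimates), that they come in families (from translation in the symplectization direction and from varying the remaining asymptotic and background data), and that the limit set of each is exactly $T$: the loops $s\mapsto u(s,\cdot)$ lie in shrinking neighborhoods of $T$, approach every point of $T$ because of the drift, and converge to no single periodic orbit. The planes come from the same scheme with domain $\C$, capping the inner end — arranged to be an embedded disk in a Darboux ball around a point of $\S^1\times D^2$ — or, equivalently, gluing the cylinder to a standard finite-energy plane asymptotic to the core orbit $\{r=0\}$.

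The main obstacle is this last step — producing a genuine $\tilde J$-holomorphic curve with the spiral-with-drift asymptotics and showing it accumulates on all of the embedded torus $T$ and nothing more. The delicate points are (i) arranging the degeneracy so that the reduced equation really forces a \emph{divergent} drift — failure of the Morse--Bott condition by itself is not enough, one must eliminate the exponential decay that would otherwise fix the asymptotic orbit — (ii) controlling the interplay of the slow radial decay with the drift finely enough to get finite energy and embeddedness, and (iii) showing the limit set is the $2$-torus rather than the three-dimensional shell the curve spirals through. The localization, the bookkeeping of choosing $f$ and $J$, and the extension to $M$ are routine once the model curve is in hand.
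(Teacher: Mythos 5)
Your reduction to a standard tubular neighborhood of a transverse knot and your qualitative picture (a degenerate invariant torus $T$ of Reeb orbits, with the end of the curve spiralling onto $T$ while its phase drifts without limit) match the paper. But the proposal stops exactly where the proof has to begin: you yourself flag ``producing a genuine $\tilde J$-holomorphic curve with the spiral-with-drift asymptotics'' as the main obstacle, and you do not resolve it. Saying that the degeneracy ``permits a slow decay whose companion phase-drift integral diverges'' and that ``in suitable end coordinates the equation reads schematically $\bar\partial w=g(s,t,w)$'' is a description of the desired outcome, not a construction; nothing in the proposal pins down a choice of $f$ and $J$ for which a solution with divergent drift actually exists, nor why \emph{failure} of exponential decay should upgrade to the limit set being all of $T$ rather than a single orbit approached non-exponentially. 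This is a genuine gap, since the entire difficulty of the problem lives there. The paper's missing ingredient is a structural one: it observes that $(S^{1}\times\R^{2n},\,d\theta+\alpha_{n})$ is a prequantization space over the exact symplectic base $(\R^{2n},d\alpha_{n})$, and that for an $S^{1}$-invariant $J$ and contact form $e^{\pi^{*}F}\lambda$ the maps $\tl u(s,t)=(a(s),\theta(s)+2\pi t,\gamma(s))$ with $\gamma$ a gradient flow line of $F$ are automatically finite-energy pseudoholomorphic cylinders (Theorem \ref{t:gradient-flow-hol-cylinders}). This converts the Cauchy--Riemann PDE into an ODE problem, and the whole analytic content is then the explicit function $F_{\delta}$ built from $e^{1/s}\bp{\sin(1/s+t)-5/4}$, for which one proves by elementary estimates that the quantity $z=1/s+\tl t$ stays bounded, forcing $\tl t\to+\infty$ and hence the omega limit set to be the full circle, for \emph{every} metric. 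Without some such mechanism your step ``looks for solutions whose phase drifts monotonically and without limit'' has no content.

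A second, smaller gap is the passage from cylinders to planes. You propose to cap the inner end by ``gluing the cylinder to a standard finite-energy plane asymptotic to the core orbit.'' Gluing is a nontrivial analytic operation, and here one of the two pieces has a degenerate end, so none of the standard gluing results apply; moreover the resulting glued curve would only be close to holomorphic, not holomorphic. The paper avoids this entirely: it realizes the complement of the $x_{n}=y_{n}=0$ locus as a prequantization space over a \emph{different} base (with the angular variable around the core playing the role of the fiber circle, Lemma \ref{l:plane-construction}), arranges the function so that the resulting cylinder is, in explicit coordinates near the negative end, the map $z\mapsto(\pi|z|^{2}+a_{1},\theta_{0},0,e^{s_{1}+it_{1}}z)$, and concludes that the puncture is literally removable. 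If you want your outline to become a proof, you need either this removable-singularity device or an honest substitute for the gluing step, in addition to an actual construction of the degenerate data and the curve.
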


We give a brief outline of what follows.  In Section \ref{s:background} we begin by recalling
some basic notions from contact geometry and pseudoholomorphic curves.  
Then, in Section \ref{s:prequant}, we explain a correspondence between gradient flow lines
on exact symplectic manifolds and pseudoholomorphic cylinders
in contact manifolds constructed as circle bundles over those symplectic manifolds.
From this construction it is clear that one can construct pseudoholomorphic cylinders
having more than one limit orbit by constructing gradient flow lines in a symplectic manifold
having an alpha or omega limit set consisting of more than a single point.
To this end, we construct in Section \ref{s:gradient} a function on the cylinder
$\R\times S^{1}$
which will have the circle $\br{0}\times S^{1}$
as the omega limit set of any nontrivial gradient flow line with respect to
any Riemannian metric
and
which
can be chosen to be linear in the $\R$-variable and independent of the $S^{1}$-variable
outside of any desired neighborhood
of $\br{0}\times S^{1}$.
Finally, in Section \ref{s:main-proof}, we
apply the results of 
Sections \ref{s:prequant} and \ref{s:gradient} to
construct finite-energy
pseudoholomorphic cylinders and planes having
tori as limit sets.
We comment that while the construction of a pseudoholomorphic cylinder
with tori as limit sets is a straightforward application of the results in
Sections \ref{s:prequant} and \ref{s:gradient},
applying these results to construct
a plane with multiple limit orbits
is a bit trickier and requires finding a situation where these results can be applied
to construct a cylinder with a removable singularity.

We close this section with a remark about notation.
In most of what follows we find it convenient to consider the circle as $\R/2\pi\Z$,
although at some points ---
specifically when considering domains of pseudoholomorphic cylinders or
periodic orbits ---
we will find it more convenient to consider the circle to be $\R/\Z$.  To avoid ambiguity we 
will use the notations $S^{1}=\R/2\pi\Z$ and $\S^{1}=\R/\Z$ to distinguish between the two.

\begin{ack}
I would like to thank Luis Diogo and Urs Frauenfelder
for helpful discussions.
I also gratefully acknowledge financial support from DFG grant BR 5251/1-1
and the Fakult\"at f\"ur Mathematik at the Ruhr-Universit\"at Bochum.
\end{ack}

\section{Pseudoholomorphic curves in contact manifolds}\label{s:background}

Here we recall some basic notions, primarily for the purpose of fixing notation.
Let $M$ be an oriented $(2n+1)$-dimensional manifold.  A $1$-form
$\lambda$ is said to be a contact form on $M$ if
\begin{equation}\label{e:contact-condition}
\text{$\lambda\wedge d\lambda^{n}$ is a nowhere vanishing.}
\end{equation}
A contact form on $M$ determines a splitting
\begin{equation}\label{e:splitting}
TM=\R\X\oplus\xi
\end{equation}
where $\xi=\ker\lambda$ is a hyperplane distribution, called the \emph{contact structure}, and
$\X$ is the \emph{Reeb vector field}, defined by
\[
i_{\X}d\lambda=0 \qquad\text{ and }\qquad i_{\X}\lambda=1.
\]
We note that \eqref{e:contact-condition} implies that $d\lambda$ restricts to a nondegenerate form on
$\xi$ and thus $(\xi, d\lambda)$ is a symplectic vector bundle over $M$.

We recall that if $\lambda$ is a contact form on $M$ and
$f:M\to\R$ is a smooth function, then
$e^{f}\lambda$ is also a contact form since
$d(e^{f}\lambda)=e^{f}(df\wedge\lambda+d\lambda)$
and hence
\[
(e^{f}\lambda)\wedge d(e^{f}\lambda)^{n}=e^{(n+1)f}\lambda\wedge d\lambda.
\]
We note for later reference that a straightforward computation shows that
the Reeb vector field for the contact form $e^{f}\lambda$ is related to the
Reeb vector field for $\lambda$ by
\begin{equation}\label{e:reeb-change-1}
X_{e^{f}\lambda}=e^{-f}\bp{\X-X_{f}}
\end{equation}
where $X_{f}$ is the unique section of $\xi$ satisfying
\begin{equation}\label{e:reeb-change-2}
i_{X_{f}}d\lambda=-df+df(\X)\lambda.
\end{equation}
That there is a unique section $X_{f}$ of $\xi$ satisfying \eqref{e:reeb-change-2} follows from
nondegeneracy of $d\lambda$ on $\xi$ and the fact that both sides of \eqref{e:reeb-change-2} vanish
on $\X$.

Given a symplectic vector bundle
$(E, \omega)$ over a given manifold $W$, a complex structure $J\in\End(E)$ is said
to be \emph{compatible} with $\omega$ if the section of
$E^{*}\otimes E^{*}$ defined by
$g_{J}:=\omega(\cdot, J\cdot)$ is symmetric and positive definite on $E$.
It is well known that the space of such $J$ is nonempty and contractible
(see e.g.\ the discussion following Proposition 5 in Section 1.3 of \cite{hoferzehnder}).
Given a contact manifold, $(M, \xi=\ker\lambda)$, we then define the set
$\J(M, \xi)$ to be the set of complex structures on $\xi$ compatible with
$d\lambda|_{\xi\times\xi}$.  We observe that if a complex structure $J\in\End(\xi)$
is compatible with $d\lambda$, then it is also compatible with
$d(e^{f}\lambda)$ since
\[
d(e^{f}\lambda)-e^{f}d\lambda=e^{f}df\wedge\lambda
\]
which vanishes on $\xi\times\xi=\ker\lambda\times\ker\lambda$.  Therefore, the set
$\J(M, \xi)$ depends only on a choice of conformal symplectic structure on  $\xi$,
and not on the choice of a specific contact form inducing that structure.

Given a manifold $M$ with contact form $\lambda$ and a compatible $J$, we can extend
$J$ to an $\R$-invariant almost complex structure $\tl J$ on $\R\times M$ by requiring
\begin{equation}\label{e:R-invariant-J}
\tl J\partial_{a}=\X \qquad\text{ and }\qquad \tl J|_{\pi_{M}^{*}\xi}=\pi_{M}^{*}J
\end{equation}
with $a$ the coordinate along $\R$ and $\pi_{M}:\R\times M\to M$ the coordinate projection.  We consider
quintuples $(\Sigma, j, \Gamma, a, u)$ where $(\Sigma, j)$ is a closed Riemann surface,
$\Gamma\subset\Sigma$ is a finite set, called the set of \emph{punctures},
and $a:\Sigma\setminus\Gamma\to\R$ and
$u:\Sigma\setminus\Gamma\to M$ are smooth maps.  We say such a quintuple is
\emph{pseudoholomorphic map for the data $(\lambda, J)$ on $M$}
if $\tl u=(a, u):\Sigma\setminus\Gamma\to\R\times M$ satisfies the equation
\begin{equation}\label{e:j-hol}
d\tl u\circ j=\tl J\circ \tl u
\end{equation}
or, equivalently, if $u$ and $a$ satisfy
\begin{equation}\label{e:j-hol-M}
\begin{gathered}
\pi_{\lambda}\circ du\circ j=J\circ \pi_{\lambda}\circ du \\
u^{*}\lambda\circ j=da
\end{gathered}
\end{equation}
where $\pi_{\lambda}:TM\approx\R\X\oplus\xi\to\xi$ is the projection of $TM$ onto $\xi$ along $\X$.
The \emph{Hofer energy} $E(u)$ of a pseudoholomorphic map $(\Sigma, j, \Gamma, a, u)$ is defined by
\begin{equation}\label{e:hofer-energy-defn}
E(u)=\sup_{\varphi\in\Xi}\int_{\Sigma\setminus\Gamma}\tl u^{*}d(\varphi\lambda)=
\sup_{\varphi\in\Xi}\int_{\Sigma\setminus\Gamma}d(\varphi(a) u^{*}\lambda)
\end{equation}
where $\Xi\subset C^{\infty}(\R, [0, 1])$ is the set of smooth functions
$\varphi:\R\to[0, 1]$ with $\varphi'(t)\ge 0$ for all $s\in\R$,
$\lim_{s\to-\infty}\varphi(s)=0$, and $\lim_{s\to\infty}\varphi(s)=1$.

To each puncture in a pseudoholomorphic map we will a assign a quantity called the mass of the puncture.
First, we will call a holomorphic embedding
$\psi:[0, +\infty)\times \S^{1}\subset\C/i\Z\to\Sigma\setminus\Gamma$ a
\emph{holomorphic cylindrical coordinate system} around $z_{0}\in\Gamma$ if
$\lim_{s\to\infty}\psi(s, t)=z_{0}$.
Given a holomorphic cylindrical coordinates $\psi$ around $z_{0}\in\Gamma$,
we consider the family of loops $v(s)=(u\circ\psi)(s, \cdot):\S^{1}\to M$ and
define the
\emph{mass $m(z_{0})$ of the puncture $z_{0}$}  by
\begin{equation}\label{e:mass}
m(z_{0})=\lim_{s\to\infty}\int_{\S^{1}}v(s)^{*}\lambda.
\end{equation}
The limit in this definition is well-defined as a result the compatibility of $J$ with $d\lambda$.
Indeed, for $s_{1}>s_{0}$ we apply Stokes' theorem to compute
\begin{align}
\int_{\S^{1}}v(s_{1})^{*}\lambda-\int_{\S^{1}}v(s_{0})^{*}\lambda
&=\int_{[s_{0}, s_{1}]\times \S^{1}}(u\circ\psi)^{*}d\lambda  \label{e:mass-stokes} \\
&=\int_{[s_{0}, s_{1}]\times \S^{1}}d\lambda(u_{s}, u_{t})\,ds\wedge dt \notag \\
&=\int_{[s_{0}, s_{1}]\times \S^{1}}d\lambda(\pi_{\lambda}(u_{s}),\pi_{\lambda}(u_{t}))\,ds\wedge dt & i_{\X}d\lambda=0 \notag \\
&=\int_{[s_{0}, s_{1}]\times \S^{1}}d\lambda(\pi_{\lambda}(u_{s}),J\pi_{\lambda}(u_{s}))\,ds\wedge dt  & \eqref{e:j-hol-M} \notag
\end{align}
and we observe the integrand in the final line above is nonnegative by compatibility of $J$ with $d\lambda$.
Thus the integral in the definition \eqref{e:mass} of mass is an increasing function of $s$, which lets
us conclude the limit is well-defined (although possibly infinite).
It can, moreover, be shown that the mass is independent of the choice of holomorphic
cylindrical coordinates near $z_{0}$.

If is a straightforward exercise using \eqref{e:mass-stokes} and definition of Hofer energy to show
that if a pseudoholomorphic map has finite Hofer energy, then all punctures have finite mass.
Furthermore, punctures with mass $0$ can be shown to be removable, that is, one can find a
pseudoholomorphic extension of the map $\tl u$ over any puncture with mass $0$
(see \cite[pgs.\ 272-3]{hwz:prop2}).
The behavior near punctures with nonzero mass is described by the following now well-known theorem of
Hofer from \cite{hofer93}.\footnote{
Hofer only considers planes in \cite{hofer93} and proves the slightly weaker statement that there
exists a sequence $s_{k}\to\infty$ so that corresponding loops
$u\circ\psi(s_{k}, \cdot)$ converge to a periodic orbit,
but the generalization of the proof to the result we state here is straightforward.
In the survey \cite[Theorem 3.2]{hwz-survey}, the appropriate result is proven for a
general pseudoholomorphic half-cylinder, albeit under a different notion of energy.
The fact that this different notion of energy implies finite
Hofer energy as defined by \eqref{e:hofer-energy-defn} is addressed in Theorem 5.1
of the same paper.
}

\begin{theorem}
Let $M$ be a compact manifold equipped with a contact form $\lambda$ and compatible
complex structure $J\in\J(M, \xi)$ on $\xi=\ker\lambda$.
Let $(\Sigma, j, \Gamma, a, u)$ be a solution to
\eqref{e:j-hol-M} and assume that $z_{0}\in\Gamma$ has mass $m(z_{0})=T\ne 0$.
Then for every holomorphic cylindrical coordinate system
$\psi:[0, \infty)\times \S^{1}\to\Sigma\setminus\Gamma$
around $z_{0}$, and every sequence $s_{k}\to\infty$ there exists a subsequence
$s_{k_{j}}$ and a smooth map $\gamma:\S^{1}=\R/\Z\to M$
with $\dot\gamma=T\cdot\X\circ\gamma$ so that
the sequence of loops $u\circ\psi(s_{k_{j}}, \cdot):\S^{1}\to M$ converge in
$C^{\infty}(\S^{1}, M)$ to $\gamma$.
\end{theorem}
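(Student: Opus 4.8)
The plan is to carry out the standard bubbling-off analysis near the puncture $z_0$. Fix a holomorphic cylindrical coordinate system $\psi:[0,\infty)\times\S^1\to\Sigma\setminus\Gamma$ around $z_0$ and write $\tl u\circ\psi=(a,u)$ on the half-cylinder; since $j\,\partial_s=\partial_t$, the system \eqref{e:j-hol-M} becomes $a_s=\lambda(u_t)$, $a_t=-\lambda(u_s)$, and $\pi_\lambda u_s+J\,\pi_\lambda u_t=0$. The Stokes computation \eqref{e:mass-stokes} shows that $s\mapsto\int_{\S^1}u(s,\cdot)^*\lambda$ is nondecreasing with derivative $\int_{\S^1}|\pi_\lambda u_s|_J^2\,dt$; since the mass $T=\lim_{s\to\infty}\int_{\S^1}u(s,\cdot)^*\lambda$ is finite by hypothesis, integrating gives the decay estimate
\[
\int_{[R,\infty)\times\S^1}|\pi_\lambda u_s|_J^2\,ds\,dt\;=\;T-\int_{\S^1}u(R,\cdot)^*\lambda\;\longrightarrow\;0\qquad(R\to\infty),
\]
so the $d\lambda$-energy near $z_0$ is finite and its tail vanishes.

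The technical heart --- and the step I expect to be the main obstacle --- is a uniform gradient bound near $z_0$: there are $R_0,C>0$ with $|d\tl u|(z)\le C$ for every $z\in[R_0,\infty)\times\S^1$, norms taken with the flat metric on the domain and a fixed metric on $\R\times M$ (legitimate since $M$ is compact). I would argue by contradiction. If $|d\tl u|(z_k)\to\infty$ along some $z_k$ tending to $z_0$, Hofer's lemma produces nearby points $z_k'$ of almost-maximal gradient over balls $B_{\ep_k}(z_k')$ with $\ep_k\to 0$ and $\ep_k|d\tl u(z_k')|\to\infty$; rescaling by $R_k:=|d\tl u(z_k')|$ and translating in the $\R$-direction (using $\R$-invariance of $\tl J$) produces $\tl J$-holomorphic maps with uniformly bounded gradient and gradient $1$ at the origin, whose $C^\infty_{loc}$ limit is a non-constant $\tl J$-holomorphic plane $\tl w:\C\to\R\times M$ of finite $d\lambda$-energy. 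Because the balls $B_{\ep_k}(z_k')$ shrink, the decay estimate forces $\int_\C\tl w^*d\lambda=0$, hence $\pi_\lambda w_s\equiv 0$; but then --- by the argument used in the last paragraph below to identify limit cylinders --- $\tl w$ is constant along the cylindrical parameter and therefore, being smooth at the center of the plane, constant, a contradiction. Granting the gradient bound, elliptic bootstrapping of the perturbed Cauchy--Riemann system in local coordinates (after subtracting $a(s,0)$ on each strip) yields uniform $C^\infty$ bounds for $\tl u$ on every strip $[s-1,s+1]\times\S^1$ with $s\ge R_0+1$.

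Finally, given $s_k\to\infty$, set $u_k(s,t)=u(s+s_k,t)$ and $a_k(s,t)=a(s+s_k,t)-a(s_k,0)$ on $[-s_k,\infty)\times\S^1$. The uniform $C^\infty$ bounds together with the Arzel\`a--Ascoli theorem and a diagonal argument produce a subsequence converging in $C^\infty_{loc}(\R\times\S^1)$ to a $\tl J$-holomorphic cylinder $\tl v=(b,v):\R\times\S^1\to\R\times M$ with $|d\tl v|$ bounded; by the decay estimate $\int_{\R\times\S^1}v^*d\lambda=0$, so $\pi_\lambda v_s\equiv\pi_\lambda v_t\equiv 0$. Then $b$ is harmonic on $\R\times\S^1$ with bounded gradient, so a Fourier expansion in the $\S^1$-variable forces $b(s,t)=\alpha s+\beta$; feeding this back into $a_s=\lambda(u_t)$, $a_t=-\lambda(u_s)$ gives $v_s\equiv 0$, so $v(s,t)=\gamma(t)$ is independent of $s$ with $\dot\gamma=\alpha\,\X\circ\gamma$, while $\alpha=\int_{\S^1}\gamma^*\lambda=T$ by the definition of mass. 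Thus $\tl v$ is the trivial cylinder over the $T$-periodic Reeb orbit $\gamma$, and in particular $u\circ\psi(s_{k_j},\cdot)=u_{k_j}(0,\cdot)\to\gamma$ in $C^\infty(\S^1,M)$, which is the assertion.
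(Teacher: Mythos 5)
The paper does not actually prove this theorem: as the footnote attached to its statement explains, it is quoted from Hofer's original work \cite{hofer93} and from the survey \cite{hwz-survey}, so there is no in-paper argument to compare against. What you have written is a reconstruction of the standard proof from those references --- monotonicity of $s\mapsto\int_{\S^{1}}u(s,\cdot)^{*}\lambda$ and vanishing of the tail $d\lambda$-energy, a bubbling-off argument for the uniform gradient bound, and extraction of a translated limit which is forced to be the trivial cylinder over a $T$-periodic Reeb orbit --- and the overall architecture is correct; the final paragraph (harmonicity and Liouville for $b$, hence $b=\alpha s+\beta$, hence $v_{s}\equiv 0$ and $\dot\gamma=\alpha\X\circ\gamma$ with $\alpha=T$ from the definition of mass) is exactly the right way to finish.

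One step is too loose to stand as written: the constancy of the bubble $\tl w$. You justify it by viewing $\tl w$ on $\C\setminus\{0\}\cong\R\times\S^{1}$ and invoking your final-paragraph argument, but that argument uses boundedness of the gradient \emph{in cylindrical coordinates} to apply Liouville to the harmonic $\R$-component, and the conformal factor $2\pi e^{2\pi s}$ makes the cylindrical gradient of $\tl w$ unbounded on the positive end even though $\abs{d\tl w}$ is bounded on $\C$. The standard repair is to argue on $\C$ directly: $\int_{\C}\tl w^{*}d\lambda=0$ gives $\pi_{\lambda}\circ dw\equiv 0$, hence $w^{*}d\lambda=0$ and $w^{*}\lambda=d\Theta$ is exact on $\C$; the second line of \eqref{e:j-hol-M} then says $a+i\Theta$ is entire with bounded derivative, hence affine, and a nonzero linear term is excluded because it forces $E(\tl w)=+\infty$. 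That last exclusion requires you to record that the bubble has \emph{finite Hofer energy} $E(\tl w)\le E(u)<\infty$ (which follows from pointwise nonnegativity of $\tl u^{*}d(\varphi\lambda)$, Fatou, and the $\R$-translation invariance of the class $\Xi$); bounded gradient and zero $d\lambda$-energy alone do not rule out the nonconstant map $z\mapsto(\operatorname{Re}z,\eta(\operatorname{Im}z))$ with $\eta$ a Reeb trajectory. With that lemma stated and proved properly, your argument is complete.
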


We will refer to the collection of periodic orbits obtained as limits of a given finite-energy
pseudoholomorphic map
as the \emph{limit set} of that map.
As mentioned in the introduction, it can be shown under some suitable nondegeneracy assumptions
that a puncture has a unique periodic orbit (up to reparametrization) in its limit set
and that the convergence to that orbit is exponential \cite{hwz:prop1, hwz:prop4, HWZ:planes, bourgeois, mora}.
In the absence of nondegeneracy however,
it has until now remained an open question
whether it's possible for a finite-energy pseudoholomorphic map to have more than one periodic orbit
in the asymptotic limit set of a given puncture.

\section{Prequantization spaces, gradient flows, and pseudoholomorphic cylinders}\label{s:prequant}

In this section we explain a correspondence between gradient
flows on symplectic manifolds and certain
pseudoholomorphic cylinders in an associated prequantization space, that is,
a contact manifold constructed as a principal
$S^{1}$-bundle over the given symplectic manifold
with the contact structure being given as the horizontal distribution determined by an
appropriate connection on the bundle.
For simplicity we focus on the case of trivial $S^{1}$-bundles over exact symplectic manifolds,
since that is all we require for the proof of our main theorem,
but we point out that the construction of pseudoholomorphic cylinders
in a prequantization space from gradient flow lines in the base
can be generalized to any prequantization space.

Let $(W, \omega=d\beta)$ be an exact symplectic manifold and 
consider
$S^{1}(\approx \R/2\pi\Z)\times W$ equipped with the $1$-form
\begin{equation}\label{e:prequant-form}
\lambda=d\theta+\pi^{*}\beta.
\end{equation}
where $\pi:S^{1}\times W\to W$ is the canonical projection onto the second factor.
The $1$-form $\lambda$ defined in this way is a contact form on $S^{1}\times W$ since
\begin{align*}
\lambda\wedge (d\lambda)^{n}
&=(d\theta+\pi^{*}\beta)\wedge \pi^{*}\omega^{n} \\
&=d\theta\wedge\pi^{*}\omega^{n}>0.
\end{align*}
We will refer to a pair $(S^{1}\times W, d\theta+\pi^{*}\beta)$
consisting of a trivial $S^{1}$-bundle and a contact form arising in this way
as a \emph{prequantization space} over the symplectic manifold $(W, \omega=d\beta)$.

We observe that the Reeb vector field of the contact form \eqref{e:prequant-form} is given by
$\dth$ and hence the splitting \eqref{e:splitting} induced on $TM$ by the contact form
is given by
\[
T(S^{1}\times W)\approx TS^{1}\oplus\xi.
\]
Thus $\xi$ is an $S^{1}$-invariant horizontal distribution of the bundle $S^{1}\times W\to W$ which gives us a
one-to-one correspondence between the space $\Gamma(TW)$ of vector fields on $W$ and the space
$\Gamma_{S^{1}}(\xi)$ of
$S^{1}$-invariant sections of the contact structure $\xi$.
This correspondence is given explicitly by the maps
\begin{equation}\label{e:horizontal-lift}
X\in T_{p}W\mapsto \tl X:=-\beta(X)\,\dth +X \in \xi_{(\theta, p)}\subset T_{(\theta, p)}(S^{1}\times W)
\end{equation}
and
\[
\tl Y\in \xi_{(\theta, p)}\mapsto d\pi(\tl Y)\in T_{p}W,
\]
where the plus sign in \eqref{e:horizontal-lift} is to be interpreted relative to the natural splitting
\[
T_{(\theta, p)}(S^{1}\times W)\approx T_{\theta}S^{1}\oplus T_{p}W
\]
arising from the canonical projection onto the factors of the Cartesian product.
The correspondence between vector fields on $W$ and $S^{1}$-invariant sections of $\xi$ generalizes
to arbitrary tensor fields on $W$.  In particular an endomorphism $A\in\End(TW)\approx TW^{*}\otimes TW$
of the form
\[
A=\sum_{i}\alpha_{i}\otimes X_{i}
\]
lifts to an $S^{1}$-invariant endomorphism $\tl A\in\End(\xi)\approx\xi^{*}\otimes\xi$ given by
\[
\tl A=\sum_{i}\pi^{*}\alpha_{i}\otimes \tl X_{i}.
\]
Equivalently, we can define $\tl A$ to be the unique section of $\End(\xi)$ satisfying
\[
\tl A\tl X=\wt{AX}
\]
for every vector field $X$ on $TW$.

We define $\J(W, \omega)$ to be the set of almost complex structures on $W$ compatible with
the symplectic form $\omega$, that is, those $j\in\End(TW)$ which square to negative the identity
and for which $g_{j}:=\omega(\cdot, j\cdot)$ is a Riemannian metric on $W$.
According to the remarks of the previous paragraph, $j$ lifts to an $S^{1}$-invariant
endomorphism $\tl j$ of $\xi$ characterized by
\[
\tl j\tl X=\wt{jX}
\]
for every vector field $X$ on $W$.  From this equation together with the linearity of the map
$X\mapsto\tl X$ and the fact that
\[
d\lambda=d(\pi^{*}\beta)=\pi^{*}(d\beta)=\pi^{*}\omega
\]
it follows that the $S^{1}$-invariant lift $\tl j\in\End(\xi)$ of a compatible almost complex structure
$j\in\J(W, \omega)$ on $W$ is an element of $\J(S^{1}\times W, \xi)$, i.e.\
a complex structure on $\xi$ compatible with $d\lambda$.

Given a choice of compatible $j\in\J(W, \omega)$
we can associate two vector fields on $W$ to any smooth real-valued function
$f$ on $W$:
the Hamiltonian vector field $X_{f}$ and the gradient $\nabla f$ defined respectively
by
\[
i_{X_{f}}\omega=-df \qquad\text{ and }\qquad g_{j}(\nabla f, \cdot)=df.
\]
These vector fields are related by the equations
\[
X_{f}=j\nabla f \qquad\text{ and }\qquad \nabla f=-jX_{f}
\]
since we can use the definition of $g_{j}$ and the antisymmetry of $\omega$ to compute
\[
i_{j\nabla f}\omega=\omega(j\nabla f, \cdot) 
=-\omega(\cdot, j\nabla f)
=-g_{j}(\cdot, \nabla f)
=-df.
\]
From the observations of the previous paragraph, the respective $S^{1}$-invariant lifts of
$\tl j$, $\wt{\nabla f}$, and $\wt{X_{f}}$ of $j$, $\nabla f$ and $X_{f}$ satisfy
\begin{equation}\label{e:ham-grad-lift}
\wt{X_{f}}=\tl j\wt{\nabla f} \qquad\text{ and }\qquad \wt{\nabla f}=-\tl j\wt{X_{f}}.
\end{equation}

Continuing to let $f:W\to\R$ denote a smooth function on $W$, we
can pull $f$ back to an $S^{1}$-invariant smooth function $\pi^{*}f$ on
$S^{1}\times W$ and consider the contact form $\lambda_{f}$ defined by
\[
\lambda_{f}=e^{\pi^{*}f}\lambda=e^{\pi^{*}f}(d\theta+\pi^{*}\beta).
\]
Since
\begin{align*}
i_{\tl X_{f}}d\lambda
&=i_{\tl X_{f}}\pi^{*}\omega \\
&=\pi^{*}(i_{X_{f}}\omega) \\
&=-\pi^{*}df \\
&=-d(\pi^{*}f)+d(\pi^{*}f)(\dth)\lambda
\end{align*}
it follows from \eqref{e:reeb-change-1}-\eqref{e:reeb-change-2} that
\begin{equation}\label{e:reeb-deformed}
X_{\lambda_{f}}=e^{-\pi^{*}f}(\dth-\tl X_{f}).
\end{equation}
From this and \eqref{e:ham-grad-lift} we note at any point $p\in W$ where $f$ has a critical point, 
$X_{\lambda_{f}}(\theta, p)=e^{-f(p)}\dth$, and thus
the fiber in $S^{1}\times W$ over $p$ is a periodic orbit of the Reeb vector field with period
$2\pi e^{f(p)}$.

We are now ready to state the main theorem of the section,
which establishes a correspondence between gradient flows on a
symplectic manifold $(W, \omega=d\beta)$ and pseudoholomorphic
cylinders in the corresponding prequantization space
$(S^{1}\times W, d\theta+\pi^{*}\beta)$.
The idea of relating gradient flow lines of a Morse function to pseudoholomorphic cylinders in
a contact manifold originates in \cite{bourgeois}
(see also \cite{sft, behwz}).
In the present context this relationship can be seen as a generalization to the contact setting
of an idea of Floer from \cite{floer} (see also \cite{SZ, HS}).

\begin{theorem}\label{t:gradient-flow-hol-cylinders}
Let $(S^{1}\times W, \lambda=d\theta+\pi^{*}\beta)$ be a prequantization space
over an exact symplectic manifold $(W, \omega)$,
let $j\in\J(W, \omega=d\beta)$ be a compatible almost complex structure on $W$,
and let $J=\tl j\in\J(S^{1}\times W, \xi)$ be the corresponding
$S^{1}$-invariant compatible complex structure on $\xi=\ker\lambda$.
Given a smooth function $f:W\to\R$, consider smooth maps
$\gamma:\R\to W$,
$\theta:\R\to S^{1}$,
and
$a:\R\to\R$
satisfying the system of o.d.e.'s
\begin{align}
\dot\gamma(s)&=2\pi \nabla f(\gamma(s)) \label{e:ode-gamma} \\
\dot\theta(s)&=-2\pi \beta(\nabla f(\gamma(s))) \label{e:ode-theta}\\
\dot a(s)&=2\pi e^{f(\gamma(s))} \label{e:ode-a}
\end{align}
with $\nabla f$ denoting the gradient with respect to the metric
$g_{j}=\omega(\cdot, j\cdot)$.
Then the map $\tl u=(a, u):\R\times\S^{1}(\approx\R/\Z)\to \R\times S^{1}(\approx\R/2\pi\Z)\times W$ defined by
\[
\tl u(s, t)=(a(s, t), u(s, t))=(a(s), \theta(s)+2\pi t, \gamma(s))
\]
is a pseudoholomorphic cylinder for the data
$(e^{\pi^{*}f}\lambda, J)$ with Hofer energy\footnote{
We remark that finiteness of the energy here does not immediately
imply that the cylinders approach periodic orbits because $W$, being equipped with an
exact symplectic form, is necessarily noncompact.  Consider, for example, the symplectic manifold
$(\R^{2}, dx\wedge dy=d(x\,dy))$ and the function
$f(x, y)=\arctan x$.  The pseudoholomorphic cylinders in the appropriate prequantization space
covering gradient flow lines in the base have finite energy
as a result of \eqref{e:energy-formula}
since the function $f$ is bounded, but
the cylinders do not approach periodic orbits since
the function $f$ has no critical points.
}
\begin{equation}\label{e:energy-formula}
E(u)=2\pi \lim_{s\to\infty}e^{f(\gamma(s))}\in [0, +\infty].
\end{equation}
\end{theorem}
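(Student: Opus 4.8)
The plan is to verify the Cauchy--Riemann equation \eqref{e:j-hol} by a direct computation in the cylindrical coordinates $(s,t)$ on $\R\times\S^{1}$, where the complex structure acts by $j\ds=\dt$, and then to read off the Hofer energy from an explicit formula for $u^{*}(e^{\pi^{*}f}\lambda)$. Throughout write $\lambda_{f}=e^{\pi^{*}f}\lambda$ and let $\tl J$ be the $\R$-invariant almost complex structure on $\R\times S^{1}\times W$ associated to $(\lambda_{f},J)$, so that $\tl J\partial_{a}=X_{\lambda_{f}}$ and $\tl J|_{\xi}=J=\tl j$. Since $j\ds=\dt$, the equation $d\tl u\circ j=\tl J\circ d\tl u$ is equivalent to the single pointwise identity $d\tl u(\dt)=\tl J\,d\tl u(\ds)$.

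First I would compute the two partial derivatives of $\tl u(s,t)=(a(s),\theta(s)+2\pi t,\gamma(s))$, obtaining $d\tl u(\dt)=2\pi\dth$ and $d\tl u(\ds)=\dot a(s)\,\partial_{a}+\bp{\dot\theta(s)\dth+\dot\gamma(s)}$. The key point is that, after substituting the flow equations \eqref{e:ode-gamma}--\eqref{e:ode-theta} into the lift formula \eqref{e:horizontal-lift}, the parenthesized term equals $2\pi\wt{\nabla f}$, the horizontal lift of $2\pi\nabla f(\gamma(s))$, and so lies in $\xi$. Then $\tl J$ acts on it as $\tl j$, and using the lift identity $\tl j\wt{\nabla f}=\wt{j\nabla f}=\wt{X_{f}}$ (cf.\ \eqref{e:ham-grad-lift}) together with \eqref{e:reeb-deformed} and \eqref{e:ode-a} --- which combine to give $\dot a(s)X_{\lambda_{f}}=2\pi e^{f(\gamma(s))}e^{-f(\gamma(s))}\bp{\dth-\wt{X_{f}}}=2\pi\bp{\dth-\wt{X_{f}}}$ --- the lift $\wt{X_{f}}$ cancels and one is left with $\tl J\,d\tl u(\ds)=2\pi\dth=d\tl u(\dt)$.

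For the energy I would compute $u^{*}\lambda_{f}$ directly. Since $u^{*}e^{\pi^{*}f}=e^{f\circ\gamma}$, $u^{*}d\theta=\dot\theta\,ds+2\pi\,dt$, and $u^{*}\pi^{*}\beta=\gamma^{*}\beta=2\pi\beta(\nabla f(\gamma))\,ds$, equation \eqref{e:ode-theta} makes the $ds$-terms cancel and leaves $u^{*}\lambda_{f}=2\pi e^{f(\gamma(s))}\,dt$. Hence for every $\varphi\in\Xi$,
\[
\tl u^{*}d\bp{\varphi(a)\lambda_{f}}=d\bp{2\pi\varphi(a(s))e^{f(\gamma(s))}\,dt}=2\pi\frac{d}{ds}\bp{\varphi(a(s))\,e^{f(\gamma(s))}}\,ds\wedge dt,
\]
a nonnegative $2$-form whose integral over $\R\times\S^{1}$ is $2\pi\bbr{\lim_{s\to+\infty}-\lim_{s\to-\infty}}\varphi(a(s))e^{f(\gamma(s))}$. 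To evaluate the two limits I would use that $f\circ\gamma$ is nondecreasing along the flow, since $\frac{d}{ds}f(\gamma(s))=2\pi\abs{\nabla f(\gamma(s))}_{g_{j}}^{2}\ge0$: this makes $e^{f(\gamma(s))}$ bounded below by a positive constant for $s\ge0$, so by \eqref{e:ode-a} we get $a(s)\to+\infty$ and $\varphi(a(s))\to1$ as $s\to+\infty$, while $\lim_{s\to+\infty}e^{f(\gamma(s))}$ exists in $(0,+\infty]$. As $s\to-\infty$ the boundary term vanishes in either case: if $a(s)\to-\infty$ then $\varphi(a(s))\to0$ while $e^{f(\gamma(s))}$ stays bounded (being nondecreasing), and if $\lim_{s\to-\infty}a(s)$ is finite then $\int_{-\infty}^{0}e^{f(\gamma(s))}\,ds<\infty$ forces the nondecreasing integrand to tend to $0$. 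Thus the integral equals $2\pi\lim_{s\to+\infty}e^{f(\gamma(s))}$ for every $\varphi\in\Xi$, and the supremum over $\Xi$ is the same, which is \eqref{e:energy-formula}.

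I expect the only real obstacle to be bookkeeping: keeping straight the interrelated lifts $\wt{\nabla f}$, $\wt{X_{f}}$, $\tl j$ and substituting correctly into \eqref{e:reeb-deformed}. Once $d\tl u(\ds)$ is recognized as $\dot a\,\partial_{a}+2\pi\wt{\nabla f}$ with the second summand horizontal, the holomorphicity is essentially forced; the energy is then an elementary improper integral in the single variable $s$, the only mild subtlety being the case split on $\lim_{s\to-\infty}a(s)$, where monotonicity of $f\circ\gamma$ along the flow is precisely what rules out a nonzero contribution from the $-\infty$ end.
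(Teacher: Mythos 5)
Your proposal is correct and follows essentially the same route as the paper: the same identification of $\tl u_s$ as $\dot a\,\partial_a+2\pi\wt{\nabla f}$ and of $2\pi\dth$ via \eqref{e:reeb-deformed}, and the same reduction of the energy to the boundary term $2\pi\varphi(a(s))e^{f(\gamma(s))}$ using monotonicity of $f\circ\gamma$ and of $a$. Your case split at $s\to-\infty$ (on whether $\lim a(s)$ is finite) is logically equivalent to the paper's (on whether $\lim e^{f(\gamma(s))}$ is positive), so there is nothing substantive to add.
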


\begin{proof}
We compute using \eqref{e:horizontal-lift}
\begin{align*}
\tl u_{s}(s, t)
&=\dot a(s)\,\partial_{a}+\dot\theta(s)\,\dth+\dot\gamma(s) \\
&=2\pi e^{f(\gamma(s))}\,\partial_{a}-2\pi \beta(\nabla f(\gamma(s)))\,\dth+2\pi \nabla f(\gamma(s)) \\
&=2\pi \bp{e^{f(\gamma(s))}\,\partial_{a}+\wt{\nabla f}(u(s, t))}
\intertext{and similarly using \eqref{e:reeb-deformed}}
\tl u_{t}(s, t)
&=2\pi \dth \\
&=2\pi \bp{\dth-\wt{X_{f}}}(u(s, t))+2\pi \wt{X_{f}}(u(s, t)) \\
&=2\pi \bp{e^{f(\gamma(s))}X_{e^{f}\lambda}(u(s,t))+\wt{X_{f}}(u(s, t))}.
\end{align*}
It then follows from the definition \eqref{e:R-invariant-J} of the $\R$-invariant extension
of $J$ to an almost complex structure on $\R\times S^{1}\times W$ and from
\eqref{e:ham-grad-lift}, that $\tl u$ satisfies the pseudoholomorphic map equation
\eqref{e:j-hol}.

It remains to compute the Hofer energy.
To do that, we first compute
\begin{align*}
u^{*}\lambda
&=\lambda(u_{s})\,ds+\lambda(u_{t})\,dt \\
&=\lambda(\wt{\nabla f})\,ds+\lambda(2\pi \dth)\,dt \\
&=2\pi dt.
\end{align*}
We then consider a smooth, increasing function $\varphi:\R\to[0, 1]$ with
$\lim_{s\to\infty}\varphi(s)=1$ and $\lim_{s\to-\infty}\varphi(s)=0$, and compute
\begin{equation}\label{e:energy-computation}
\begin{aligned}
\int_{[s_{0}, s_{1}]\times\S^{1}}\tl u^{*}d(\varphi e^{\pi^{*}f}\lambda)
&=\int_{[s_{0}, s_{1}]\times\S^{1}}d(\varphi(a) e^{f(\gamma)}2\pi dt) \\
&=\bp{\int_{\br{s_{1}}\times\S^{1}}-\int_{\br{s_{0}}\times\S^{1}}}2\pi \varphi(a) e^{f(\gamma)}\,dt  \\
&=2\pi \bp{\varphi(a(s_{1}))e^{f(\gamma(s_{1}))}-\varphi(a(s_{0}))e^{f(\gamma(s_{0}))}}.
\end{aligned}
 \end{equation}
From \eqref{e:ode-gamma} and \eqref{e:ode-a} we know that the function $e^{f\circ\gamma}$ is increasing
and $a$ is strictly increasing with increasing derivative.
Thus $\lim_{s\to\infty}a(s)=+\infty$ and we can conclude that
\[
\lim_{s_{1}\to\infty}\varphi(a(s_{1}))e^{f(\gamma(s_{1}))}
=\bp{\lim_{s_{1}\to\infty}\varphi(a(s_{1}))}\bp{\lim_{s_{1}\to\infty}e^{f(\gamma(s_{1}))}}
=\varphi(+\infty)\lim_{s_{1}\to\infty}e^{f(\gamma(s_{1}))}
=\lim_{s_{1}\to\infty}e^{f(\gamma(s_{1}))}.
\]
Again using that $e^{f\circ\gamma}$ is increasing we can know that $\lim_{s_{0}\to-\infty}e^{f(\gamma(s_{0}))}$ exists and
is either positive or zero.  If the case that this limit is positive, we know from
\eqref{e:ode-a} that $\lim_{s_{0}\to-\infty}a(s_{0})=-\infty$ and hence that
$\lim_{s_{0}\to-\infty}\varphi(a(s_{0}))=0$.
In either case, we conclude that
\[
\lim_{s_{0}\to-\infty}\varphi(a(s_{0}))e^{f(\gamma(s_{0}))}
=0
\]
because it's a product of increasing, positive functions, at least one of which limits to $0$ as $s_{0}\to-\infty$.
Hence, taking limits in \eqref{e:energy-computation} above leads to
\[
\int_{\R\times\S^{1}}\tl u^{*}d(\varphi e^{f}\lambda)=2\pi \lim_{s\to\infty}e^{f(\gamma(s))}
\]
for any $\varphi\in\Xi$, which establishes $E(u)=2\pi \lim_{s\to\infty}e^{f(\gamma(s))}$ as claimed.
\end{proof}

\section{A gradient flow with a $1$-dimensional limit set}\label{s:gradient}

In this section we construct a function so that the
omega limit set of all of its nontrivial gradient flow lines is diffeomorphic to a circle.
This function will be used in the next section in conjunction with
Theorem \ref{t:gradient-flow-hol-cylinders} above
to construct finite-energy cylinders and planes localized near a transverse knot which have
tori as limit sets.

The main theorem of this section is the following.
\begin{theorem}\label{t:function-construction}
For any $\delta>0$, there exists a smooth function $F_{\delta}:\R\times S^{1}(\approx\R/2\pi\Z) \to \R$ so that
\begin{itemize}
\item $F_{\delta}(s, t)=s$ for $s\le\delta$,
\item $s\le F_{\delta}(s, t)< 0$ for $s\in(-\delta, 0)$,
\item $F_{\delta}(s, t)=0$ for $s\ge 0$,
\item $dF_{\delta}(s, t)\ne 0$ for $s<0$,
\end{itemize}
and so that for any choice of Riemannian metric
on $\R\times S^{1}$, the solution to the initial value problem
\[
\gamma(\tau)=\nabla F_{\delta}(\gamma(\tau)) \qquad \gamma(0)=(s_{0},t_{0}) \text{ with $s_{0}<0$}
\]
exists for all $\tau\ge 0$ and has the circle $\br{0}\times S^{1}$ as its omega limit set.
\end{theorem}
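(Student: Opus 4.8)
The plan is to build $F_\delta$ by hand as a function of $s$ alone on most of the cylinder, and to engineer a little ``trap'' near $s=0$ so that any gradient trajectory is forced to accumulate on the whole circle $\{0\}\times S^1$, no matter which metric is used. The first three bullet points essentially force the shape of $F_\delta$: it must be $s$ for $s\le\delta$ and $0$ for $s\ge0$, so the only freedom is on $(-\delta,0)$ (or, if we want a smooth join, on a slightly larger interval near the origin), where we need to interpolate in a way that keeps $dF_\delta\ne0$ for $s<0$. The simplest attempt is $F_\delta(s,t)=h(s)$ for a smooth nondecreasing $h$ with $h(s)=s$ for $s\le\delta$, $h(s)=0$ for $s\ge0$, $h'(s)>0$ for $s<0$; but then a gradient trajectory for a product-type metric just runs monotonically in $s$ toward the line $\{s=0\}$ while staying at fixed $t$, so its omega-limit set is a single point. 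To spread the limit set over the whole circle we add an oscillatory term: take
\[
F_\delta(s,t)=h(s)+\rho(s)\,g(t)
\]
where $\rho\ge0$ is a bump supported in $(-\delta/2,0)$ with $\rho>0$ there, $g:S^1\to\R$ is, say, a Morse function with exactly one maximum and one minimum, and the sizes are chosen so that $h'(s)>|\rho'(s)g(t)|$ on $(-\delta,0)$, guaranteeing $\partial_s F_\delta<0$ there and hence $dF_\delta\ne0$; also arrange $s\le F_\delta<0$ on $(-\delta,0)$ by keeping the perturbation small. The point of the $g(t)$ term is that on the region where $\rho>0$, the level sets of $F_\delta$ wind around the cylinder, so the gradient flow cannot converge in the $t$-direction.

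The key step is then the dynamical claim: for \emph{every} Riemannian metric $\mu$ on $\R\times S^1$, a trajectory $\dot\gamma=\nabla_\mu F_\delta(\gamma)$ starting at $s_0<0$ exists for all $\tau\ge0$ and has omega-limit set exactly $\{0\}\times S^1$. For existence for all forward time, note $F_\delta$ is bounded above (by $0$) and nondecreasing along the flow, and the flow stays in the compact region $\{s_0\le s\le\delta\}\times S^1$ (since $\partial_s F_\delta\le 0$ away from where things are flat, one checks $s$ is nondecreasing along the trajectory and bounded by $0$), so no finite-time escape. For the limit set: since $F_\delta\circ\gamma$ is increasing and bounded by $0$, it converges to some $c\le0$; I claim $c=0$. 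If $c<0$, the trajectory would have to accumulate on the compact level set $\{F_\delta=c\}$, but on that set $dF_\delta\ne0$, so $|\nabla_\mu F_\delta|$ is bounded below near it and $F_\delta$ would strictly increase past $c$ along the flow — contradiction. (This is the standard Łojasiewicz-free argument: bounded monotone $+$ no critical points on the limit level $\Rightarrow$ flow must push through.) Hence $c=0$ and the omega-limit set $\omega(\gamma)$ is a nonempty, compact, connected, flow-invariant subset of $\{F_\delta=0\}=\{s\ge0\}\times S^1$. On $\{s>0\}$, $F_\delta$ is constant, so $\nabla_\mu F_\delta=0$ there and those would be fixed points; but the trajectory comes from $s_0<0$ and $F_\delta$ is \emph{strictly} increasing along it for as long as $dF_\delta\ne0$, i.e.\ as long as $s<0$, so $\gamma$ never reaches $\{s\ge0\}$ in finite time and thus contains no fixed point in $\{s>0\}$ in its image; one concludes $\omega(\gamma)\subset\{0\}\times S^1$. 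Finally $\omega(\gamma)$ is a nonempty connected closed subset of the circle $\{0\}\times S^1$ that is invariant under the gradient flow — but $\{s=0\}$ is itself a level set of $F_\delta$ on which $\nabla_\mu F_\delta$ is \emph{tangent} only if $dF_\delta$ vanishes there, which it does (the right-hand limit of $\partial_s F_\delta$ is $0$ and the $t$-derivative is $0$ since $\rho(0)=0$), so every point of $\{0\}\times S^1$ is a rest point of the flow; hence any nonempty closed invariant subset is allowed, and I must rule out a proper sub-arc. Here the oscillatory term does the work: I will show that for $s$ slightly negative the trajectory is forced to drift monotonically in $t$ (the $\dot t$ component has a sign coming from $\rho(s)g'(t)$ through the metric), so $\gamma(\tau)$ winds infinitely often around the cylinder as $\tau\to\infty$, forcing $\omega(\gamma)=\{0\}\times S^1$ in full.

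The main obstacle is precisely this last point — making the winding argument robust against an \emph{arbitrary} metric $\mu$, since $\nabla_\mu F_\delta$ need not be the naive gradient and its $t$-component is $\mu^{-1}$ applied to $dF_\delta$, which mixes the $s$- and $t$-derivatives. The fix is to choose $h$ and $\rho$ so that on the support of $\rho$ the ratio $|\partial_s F_\delta|/|\partial_t F_\delta|$ is controlled (e.g.\ by making $h$ very flat there compared to $\rho g$), so that $dF_\delta$ points in a nearly ``purely-$t$'' direction on a band $s\in(-\eta,0)$; then for any positive-definite $\mu$ the $t$-component of $\nabla_\mu F_\delta$ dominates and keeps a definite sign away from the isolated zeros of $g'$, and a short ODE-comparison argument shows the trajectory cannot get stuck near a zero of $g'$ but must circulate. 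Alternatively — and this may be cleaner to write — one shows directly that $\omega(\gamma)$, being a connected invariant set of rest points, together with the fact that $\gamma$ leaves every neighborhood of any single point of $\{0\}\times S^1$ infinitely often (because the level sets $\{F_\delta=c\}$ for $c\in(c_0,0)$ are \emph{embedded circles isotopic to $\{0\}\times S^1$}, and $\gamma$ crosses all of them), forces $\omega(\gamma)$ to be all of $\{0\}\times S^1$. I would pick whichever of these two phrasings makes the metric-independence most transparent; the topological one, using that the intermediate level sets are full circles winding around the cylinder, seems least dependent on the metric and is the version I would carry out in detail.
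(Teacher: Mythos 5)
There is a genuine gap, and it is located exactly where you suspect: the winding argument. The construction $F_\delta(s,t)=h(s)+\rho(s)\,g(t)$ with $g$ a fixed Morse function on $S^1$ cannot work, even for the standard product metric. Since $g$ is a function on a circle it has at least two critical points; at a critical point $t_*$ of $g$ you have $\partial_t F_\delta(s,t_*)=\rho(s)g'(t_*)=0$, so for the Euclidean metric the line $\{t=t_*\}$ is invariant under the gradient flow and a trajectory starting on it converges to the single point $(0,t_*)$. Worse, for a generic starting point the $t$-equation $\dot t=\rho(s)g'(t)$ is a gradient-like flow on the circle driving $t$ toward the maximum of $g$, so the omega limit set is again one point. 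Your proposed fixes do not repair this: the ``drift in $t$ has a definite sign'' claim is false because $g'$ must change sign on $S^1$ (the trajectory gets \emph{stuck} at a zero of $g'$ rather than circulating past it), and the topological alternative proves too little --- the level sets $\{F_\delta=c\}$ of your $F_\delta$ are graphs $s=s_c(t)$ over the circle, so ``$\gamma$ crosses every level circle'' only gives $s(\tau)\to 0$, i.e.\ $\omega(\gamma)\subset\{0\}\times S^1$; the unperturbed function $h(s)$ has exactly the same level-set picture and its flow lines converge to points.

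The missing idea is that the angular phase of the oscillation must itself depend on $s$ and blow up as $s\to 0^-$, so that the level sets \emph{spiral} infinitely many times around the cylinder rather than closing up after one loop. The paper takes $G(s,t)=e^{1/s}(\sin(1/s+t)-5/4)$ for $s<0$ (glued to $s$ near $s=-\delta$ and to $0$ for $s\ge 0$); the constant $5/4\in(1,\sqrt2)$ is tuned so that $dG$ never vanishes for $s<0$ while the combination $z=1/s+\tilde t$ satisfies a differential inequality, valid for \emph{any} metric, that traps $z$ in a bounded window. Since $1/s\to-\infty$, boundedness of $z$ forces the lift $\tilde t\to+\infty$, i.e.\ the trajectory winds infinitely often, which is what makes the whole circle the omega limit set independently of the metric. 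Your preliminary steps (forward completeness, $F\circ\gamma\nearrow 0$, $\omega(\gamma)\subset\{0\}\times S^1$, and the observation that the real content is ruling out a proper subset of the circle) all match the paper and are fine; it is the choice of perturbation, and hence the mechanism forcing unbounded winding, that needs to be replaced.
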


\begin{remark}
The fact that there exist gradient flows with omega limit sets consisting of more than a single point
has been known for some time and
a qualitative description of a function like the one we construct below is given in
\cite[pg.\ 261]{curry}.  In \cite[Example 3, pgs.\ 13-14]{palis-demelo} a function
in $\R^{2}$ is given for which
it can be shown that there is at least one gradient flow line whose omega limit set is a circle.

An interesting feature of the functions $F_{\delta}$ provided by our theorem is that 
every nontrivial flow line, independent of the metric,
has the circle $\br{0}\times S^{1}$ as its omega limit set.
Since the behavior of the functions $F_{\delta}$ is especially simple outside of a neighborhood of this limit
set, this allows for a good deal of flexibility in constructing functions on a given Riemannian manifold
whose gradients will have flow lines having a circle as an omega limit set.
For example, it is a straightforward corollary of this theorem that
one can construct smooth functions
on any Riemannian $2$-manifold $(M^2, g)$
having any desired embedded circle as the limit set of some gradient flow line.
Indeed, we can either identify a neighborhood of a given embedded circle with
$(-\ep, \ep)\times \R/2\pi\Z$
or, in the nonorientable case, we can identify a double cover of a neighborhood
of the circle
with $(-\ep, \ep)\times\R/2\pi\Z$ with the nontrivial deck-transformation of the cover
being given by the map $(s, t)\mapsto (-s, t+\pi)$.
We then consider the function $G(s, t):=F_{\ep/2}(s, t)+F_{\ep/2}(-s, t+\pi)$
which is invariant under the action of the deck transformation in the nonoriented case,
agrees with $-\abs{s}$ for $\abs{s}\in (\ep/2, \ep)$,
and
which will have $\br{0}\times S^{1}$ as the omega limit set of any gradient flow line 
starting in the neighborhood.
The function can then be extended to a smooth function on the entire surface using 
an appropriate cutoff function.
\end{remark}

We will construct the function in the following paragraph and prove
that it has the required properties in a series of lemmas.
Throughout this section we will make no notational distinction between smooth functions
with domain $\R\times S^{1}\approx\R\times\R/2\pi\Z$ and functions on
$\R^{2}$ which are $2\pi$-periodic in the second variable.

We consider the function $G:\R\times \R/2\pi\Z=\br{(s, t)}\to\R$ defined by\footnote{
As will become clear from our proof, the $5/4$ in our example can be replaced with any constant
strictly bigger than $1$ and strictly less than $\sqrt{2}$.
}
\[
G(s, t)=
\begin{cases}
e^{1/s}\bp{\sin(1/s+t)-5/4} & s<0 \\
0 & s\ge 0
\end{cases}
\]
and we note that $G$ is smooth and that\footnote{
The left-most part of this inequality can be seen from the following argument.
To show that $-\frac{9}{4}e^{1/s}-s$ is positive for all $s<0$, it suffices to show that
$g(t)=\frac{9}{4}te^{t}+1$ is positive for all $t<0$.  A straightforward argument using single-variable calculus
then shows that $g(-1)=-\frac{9}{4}e^{-1}+1>0$ is the absolute minimum of the function $g$ on $\R$.
}
\begin{equation}\label{e:bounds-G}
s< -\tfrac{9}{4} e^{1/s} \le G(s, t)\le -\tfrac{1}{4}e^{1/s} \text{ for all $s<0$.}
\end{equation}
For a given value $\delta>0$
we let $\eta:\R\to[0, 1]$ be a smooth cut-off function satisfying
\[
\eta(t)=
\begin{cases}
0 & s<-\delta \\
1& s>-\delta/2
\end{cases}
\]
and $\eta'(t)\ge 0$ everywhere and define the function
$F:\R\times\R/2\pi\Z\to\R$ by
\begin{equation}\label{e:F-definition}
F(s, t)=(1-\eta(s))s+\eta(s)G(s, t).
\end{equation}
We observe that this definition with \eqref{e:bounds-G} implies that
\begin{equation}\label{e:bounds-F}
s \le F(s, t)\le -\tfrac{1}{4}e^{1/s} \text{ for all $s<0$}
\end{equation}
so the first three properties required of $F_{\delta}$ in the theorem are clearly satisfied.
The fourth property, concerning the critical set, is then addressed by the following lemma.

\begin{lemma}\label{l:F-crit-set}
The set of critical points of the above defined function $F$ is $s\ge 0$.
\end{lemma}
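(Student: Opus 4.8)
The plan is to show that $dF(s,t)=0$ if and only if $s\ge 0$. One direction is immediate: for $s\ge 0$ we have $F\equiv 0$, so certainly $dF=0$. For the converse, since $F$ is smooth and agrees with $s$ for $s<-\delta$ and with $G(s,t)$ for $s>-\delta/2$, and equals the convex-combination $(1-\eta(s))s+\eta(s)G(s,t)$ in between, I would split the region $s<0$ into the three overlapping zones $s<-\delta$, $-\delta\le s\le -\delta/2$, and $-\delta/2<s<0$, and check in each that $dF$ cannot vanish.

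On $s<-\delta$ we have $F(s,t)=s$, so $\partial_s F=1\ne 0$. On $-\delta/2<s<0$ we have $F=G$, so I would compute $\partial_t G(s,t)=e^{1/s}\cos(1/s+t)$ and $\partial_s G(s,t)=-\tfrac{1}{s^2}e^{1/s}\bigl(\sin(1/s+t)-\tfrac54\bigr)+e^{1/s}\bigl(-\tfrac{1}{s^2}\cos(1/s+t)\bigr)$; if $\partial_t G=0$ then $\cos(1/s+t)=0$, whence $\sin(1/s+t)=\pm 1$, and substituting into $\partial_s G$ gives $\partial_s G=-\tfrac{1}{s^2}e^{1/s}(\pm 1-\tfrac54)\ne 0$ since $\pm1-\tfrac54\in\{-\tfrac14,-\tfrac94\}$ is never zero. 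Hence $dG\ne 0$ on $s<0$, which also handles the overlap. The remaining zone $-\delta\le s\le -\delta/2$ is the only real work: there $F=(1-\eta)s+\eta G$, so $\partial_t F=\eta(s)\,\partial_t G(s,t)$ and $\partial_s F=(1-\eta(s))+\eta(s)\partial_s G(s,t)+\eta'(s)(G(s,t)-s)$. I would argue that if $\partial_t F=0$ then either $\eta(s)=0$ (so $\partial_s F=1$) or $\partial_t G(s,t)=0$; in the latter case $\cos(1/s+t)=0$ and $\sin(1/s+t)=\pm1$, so $\partial_s G=-\tfrac{1}{s^2}e^{1/s}(\pm1-\tfrac54)$, which has the same sign (negative) as $G(s,t)-s$: indeed $G(s,t)-s>0$ by \eqref{e:bounds-G}, while $\partial_s G<0$, and $1-\eta(s)\ge 0$, $\eta(s)>0$, $\eta'(s)\ge 0$. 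Thus in $\partial_s F=(1-\eta)+\eta\,\partial_s G+\eta'(G-s)$ the first term is $\ge 0$... wait, I need the whole sum nonzero, and the $\eta\,\partial_s G$ term is negative, so I cannot just add signs. Instead I would use that when $\partial_t G=0$ we have $\partial_s G = -\tfrac{1}{s^2}e^{1/s}(\pm 1 - \tfrac 54)$, and simultaneously, since $G(s,t)=e^{1/s}(\pm1-\tfrac54)$ at such a point, $\partial_s G = \tfrac{1}{s^2}\,G(s,t) \cdot(-1)$... more precisely $\partial_s G(s,t) = -\tfrac{1}{s^2}G(s,t)$ at points where $\cos(1/s+t)=0$. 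Then $\partial_s F = (1-\eta(s)) + \eta(s)\bigl(-\tfrac1{s^2}\bigr)G(s,t) + \eta'(s)(G(s,t)-s)$; since $G(s,t)<0$ and $-\tfrac1{s^2}<0$, the middle term is positive, $1-\eta(s)\ge 0$, and $\eta'(s)(G(s,t)-s)\ge 0$ because $G(s,t)-s\ge 0$. So $\partial_s F>0$, a strict inequality coming from the middle term being strictly positive (as $\eta(s)>0$ for $s>-\delta$ and $G(s,t)\ne 0$).

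So the clean way to organize everything is: first record the identity $\partial_s G(s,t)=-\tfrac1{s^2}G(s,t)$ valid at any point with $\partial_t G(s,t)=0$ (equivalently $\cos(1/s+t)=0$), together with $\partial_t G(s,t)=e^{1/s}\cos(1/s+t)$; then observe that a critical point of $F$ with $s<0$ must have $\partial_t F=\eta(s)\partial_t G=0$, forcing $\eta(s)=0$ or $\partial_t G=0$; dispatch the $\eta(s)=0$ case via $\partial_s F=1$; and in the $\partial_t G=0$ case plug the identity into $\partial_s F=(1-\eta(s))+\eta(s)\partial_s G(s,t)+\eta'(s)(G(s,t)-s)$ and conclude $\partial_s F>0$ using $G<0$, $1-\eta\ge 0$, $\eta>0$, $\eta'\ge 0$, and $G(s,t)-s\ge 0$ from \eqref{e:bounds-G}. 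The main obstacle is precisely this last computation: one has to resist the temptation to bound terms by signs blindly (since $\eta\,\partial_s G$ is negative) and instead exploit the algebraic coincidence that $\partial_s G = -\tfrac{1}{s^2}G$ at the relevant points makes the cross term cooperate rather than fight. I expect the bound \eqref{e:bounds-G}, $s<G(s,t)<0$, to be exactly what is needed to make the $\eta'$ term have the right sign and to ensure $G\ne 0$ for the strictness.
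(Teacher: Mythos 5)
Your argument is correct and rests on the same algebra as the paper's: the paper packages your case analysis into the single inequality $dF(s^{2}\partial_{s}+\partial_{t})>0$ for $s<0$, which follows from the identity $s^{2}G_{s}+G_{t}=-G$ together with the bound $s<G(s,t)<0$; your observation that $\partial_{s}G=-\tfrac{1}{s^{2}}G>0$ wherever $\cos(1/s+t)=0$ is exactly the restriction of that identity to the locus $\partial_{t}G=0$. The only cosmetic slip is the claim that $\eta(s)=0$ gives $\partial_{s}F=1$ (the term $\eta'(s)(G-s)\ge 0$ is still present), but since that term is nonnegative the conclusion $\partial_{s}F\ge 1>0$ stands.
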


\begin{proof}
We first compute for $s<0$
\begin{equation}\label{e:G-s}
\begin{aligned}
G_{s}(s, t)
&=-1/s^{2}e^{1/s}\bp{\sin(1/s+t)+\cos(1/s+t)-\tfrac{5}{4}} \\
&=-1/s^{2}e^{1/s}\bp{\sqrt{2}\sin(1/s+t+\tfrac{\pi}{4})-\tfrac{5}{4}} \\
\end{aligned}
\end{equation}
and
\begin{equation}\label{e:G-t}
G_{t}(s, t)
=e^{1/s}\cos(1/s+t)
\end{equation}
and note then that
\begin{align*}
dG(s^{2}\,\ds+\dt)
&=s^{2}G_{s}+G_{t} \\
&=-G
\end{align*}
which is everywhere positive for $s<0$.
We then compute
\begin{align*}
dF(s, t)
&=\eta'(s)(G(s, t)-s)\,ds+(1-\eta(s))\,ds+\eta(s)dG(s, t)
\end{align*}
and thus
\[
dF(s^{2}\,\ds+\dt)
=s^{2}\eta'(s)(G(s, t)-s)+(1-\eta(s))s^{2}+\eta(s)(-G),
\]
which we claim is always positive for $s<0$.  Indeed the first term is always nonnegative since,
as observed above in \eqref{e:bounds-G},
$G(s, t)\ge-\frac{9}{4}e^{1/s}>s$ for all $s<0$.
Meanwhile the second two terms are the convex sum of positive quantities and thus always positive.
We've thus found a vector field $v=s^{2}\,\ds+\dt$ for which $dF(v)>0$ for $s<0$ which shows that
$F$ has no critical points for $s<0$, and hence the critical set of $F$ is $s\ge 0$ where $F$ vanishes identically.
\end{proof}

As an immediate corollary we are able to show that the $\R$-component of any
nontrivial gradient flow line of $F$ converges to $0$ in forward time.

\begin{lemma}\label{l:F-forward-time}
For an arbitrary Riemannian metric $g$ on $\R\times S^{1}$ and a point $(s_{0}, t_{0})\in\R^{-}\times S^{1}$,
the solution $\gamma(\tau)=(s(\tau), t(\tau))\in\R\times S^{1}$ to
\begin{equation}\label{e:gradient-flow-ode}
\gamma'(\tau)=\nabla^{g} F(\gamma(\tau)) \qquad \gamma(0)=(s_{0}, t_{0})
\end{equation}
exists for all $\tau\ge 0$ and $\lim_{\tau\to\infty}s(\tau)=0$.
\end{lemma}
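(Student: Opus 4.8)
The plan is to use $F$ as a Lyapunov function for its own gradient flow. If $\gamma$ solves \eqref{e:gradient-flow-ode}, then $\tfrac{d}{d\tau}F(\gamma(\tau)) = dF\bp{\nabla^{g}F(\gamma(\tau))} = \norm{\nabla^{g}F(\gamma(\tau))}_{g}^{2} \ge 0$, so $\tau\mapsto F(\gamma(\tau))$ is nondecreasing, and it is bounded above by $0$ since $F\le 0$ everywhere. First I would record two consequences of \eqref{e:bounds-F}: writing $c := F(s_{0},t_{0})$, one has $s_{0}\le c < 0$, and along the flow $c \le F(\gamma(\tau)) \le 0$.

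Next I would prove forward-completeness of $\gamma$ by confining it to a compact set. Since $F$ and $dF$ vanish on $\br{s\ge 0}$ (Lemma \ref{l:F-crit-set}) while $dF(s_{0},t_{0})\ne 0$, uniqueness for the smooth ODE \eqref{e:gradient-flow-ode} forbids $\gamma$ from ever reaching $\br{s\ge 0}$ — otherwise $\gamma$ would be the constant solution sitting at that point, contradicting $s_{0}<0$ — so $s(\tau)<0$ throughout the maximal existence interval $[0,T)$. Using that $F(s,t)=s$ for $s\le-\delta$ together with $F(\gamma(\tau))\ge c$, one gets $s(\tau)\ge\min\br{c,-\delta}\ge\min\br{s_{0},-\delta}$: at times with $s(\tau)\le-\delta$ this holds because $s(\tau)=F(\gamma(\tau))\ge c$, and at times with $s(\tau)\in(-\delta,0)$ it is trivial. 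Hence $\gamma([0,T))$ lies in the compact set $[\min\br{s_{0},-\delta},0]\times S^{1}$, and the standard escape lemma gives $T=+\infty$.

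Finally I would identify the limit. Being nondecreasing and bounded, $F(\gamma(\tau))$ converges to some $L$ with $c\le L\le 0$; I claim $L=0$. If $L<0$, then because $F\ge s$ for $s<0$ and $F\equiv 0$ for $s\ge 0$ there is $\ep>0$ with $F>L$ on $\br{s>-\ep}$, so $F(\gamma(\tau))\le L$ forces $s(\tau)\le-\ep$ for all $\tau$; thus $\gamma$ remains in the compact set $K:=[\min\br{s_{0},-\delta},-\ep]\times S^{1}$, on which $dF\ne 0$ by Lemma \ref{l:F-crit-set}, so that $\mu:=\min_{K}\norm{\nabla^{g}F}_{g}^{2}>0$ and $F(\gamma(\tau))-F(\gamma(0))=\int_{0}^{\tau}\norm{\nabla^{g}F(\gamma)}_{g}^{2}\,d\sigma\ge\mu\tau\to\infty$, contradicting $F\le 0$. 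Therefore $L=0$. To conclude $s(\tau)\to 0$, I would invoke the \emph{upper} bound in \eqref{e:bounds-F}: since $s(\tau)<0$ we have $-\tfrac14 e^{1/s(\tau)}\ge F(\gamma(\tau))\to 0$, hence $e^{1/s(\tau)}\to 0$, i.e.\ $1/s(\tau)\to-\infty$, i.e.\ $s(\tau)\to 0$.

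The main obstacle is a soft one: the Lyapunov argument directly controls only $F(\gamma(\tau))$, and the inequality $s\le F$ by itself pins down only $\limsup_{\tau\to\infty}s(\tau)\le 0$; converting $F(\gamma(\tau))\to 0$ into $s(\tau)\to 0$ genuinely requires the two-sided squeeze $s\le F(s,t)\le-\tfrac14 e^{1/s}$ of \eqref{e:bounds-F}. Once one keeps straight which of the two bounds is doing which job, the rest is the routine "a Lyapunov function forces a compact, critical-point-free orbit to have unbounded action" contradiction, together with forward-completeness from compactness of the orbit closure.
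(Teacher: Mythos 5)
Your proposal is correct and follows essentially the same route as the paper: confine the forward orbit to a compact strip $[a,0]\times S^{1}$ to get global existence, use that $F(\gamma(\tau))$ must converge to the unique critical value $0$ (you spell out the standard "gradient bounded below on a compact critical-point-free set forces unbounded growth" argument that the paper cites as a general property of gradient flows), and then extract $s(\tau)\to 0$ from the upper bound in \eqref{e:bounds-F}. Your explicit remark that it is the \emph{upper} bound $F\le-\tfrac14 e^{1/s}$ that converts $F(\gamma(\tau))\to 0$ into $s(\tau)\to 0$ is exactly the point the paper's one-line conclusion relies on.
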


\begin{proof}
Since $dF(s, t)=0$ for $s\ge 0$ and $F(s, t)=s$ agrees with $s$ for $s< -\delta$, we know that
any solution to
\eqref{e:gradient-flow-ode} stays bounded in a set of the form
$[a, 0]\times S^{1}$ in forward time which implies that
the solution exists for all $\tau\ge 0$.
Given that the solution $\gamma(\tau)$ exists and is bounded in forward time,
we know from general properties of gradient flows
that $\lim_{\tau\to\infty}F(\gamma(\tau))$ exists and is equal to a critical value of $F$.
Since we have just seen in Lemma \ref{l:F-crit-set}, that $0$ is the unique critical value of $F$,
we conclude $\lim_{\tau\to\infty}F(\gamma(\tau))=0$.  This with \eqref{e:bounds-F} implies that
$\lim_{\tau\to\infty}s(\tau)=0$.
\end{proof}

The key step to proving the claim about the omega limit sets of flow lines of $F$ is the following lemma.

\begin{lemma}\label{l:z-bounded}
Let $\gamma(\tau)=(s(\tau), t(\tau))$ be a solution to \eqref{e:gradient-flow-ode}, and let
$\tl t:\R^{+}\to\R$ be a choice of lift of $t:\R^{+}\to S^{1}$.
Then the function $z:\R^{+}\to\R$ defined by
\[
z(\tau)=\frac{1}{s(\tau)}+\tl t(\tau)
\]
is bounded.
\end{lemma}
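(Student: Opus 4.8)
The plan is to reduce to the region where $s$ is close to $0$, compute $\dot z$ along the flow explicitly, and then run an elementary phase‑line (trapping) argument for $z$.

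\medskip
\emph{Reduction and set‑up.} By Lemma~\ref{l:F-forward-time} the solution $\gamma(\tau)=(s(\tau),t(\tau))$ exists for all $\tau\ge 0$ and $s(\tau)\to 0$; since $\{s\ge 0\}$ is the critical set of $F$ (Lemma~\ref{l:F-crit-set}), a non‑constant flow line starting in $\{s<0\}$ stays in $\{s<0\}$, so $z$ is defined, and there is a $\tau_{0}$ with $s(\tau)\in(-\delta/2,0)$ for all $\tau\ge\tau_{0}$. As $z$ is continuous it is bounded on $[0,\tau_{0}]$, so it suffices to bound $z$ on $[\tau_{0},\infty)$, where $F=G$. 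There, writing $z=1/s+t$ and
\[
A(z):=\sqrt{2}\sin(z+\tfrac{\pi}{4})-\tfrac54=\sin z+\cos z-\tfrac54,\qquad B(z):=\cos z,
\]
equations \eqref{e:G-s} and \eqref{e:G-t} become $F_{s}=-s^{-2}e^{1/s}A(z)$ and $F_{t}=e^{1/s}B(z)$. The function $A$ is smooth and $2\pi$‑periodic, vanishes exactly at $z\equiv a^{*}:=\arcsin\tfrac{5}{4\sqrt2}-\tfrac{\pi}{4}$ and $z\equiv b^{*}:=\tfrac{3\pi}{4}-\arcsin\tfrac{5}{4\sqrt2}\pmod{2\pi}$ (with $a^{*}<b^{*}<a^{*}+2\pi$), is positive on $(a^{*},b^{*})+2\pi\Z$ and negative on the complement, and has $B(a^{*})\ne 0\ne B(b^{*})$.

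\medskip
\emph{A formula for $\dot z$.} Writing $g=g_{ss}\,ds^{2}+2g_{st}\,ds\,dt+g_{tt}\,dt^{2}$ and $\Delta:=g_{ss}g_{tt}-g_{st}^{2}>0$, the flow equation \eqref{e:gradient-flow-ode} gives $\dot s=\Delta^{-1}(g_{tt}F_{s}-g_{st}F_{t})$ and $\dot t=\Delta^{-1}(g_{ss}F_{t}-g_{st}F_{s})$, so substituting $F_{s},F_{t}$ and collecting terms,
\[
\dot z=-\frac{\dot s}{s^{2}}+\dot t
=\frac{e^{1/s}}{s^{4}\Delta}\Big(A(z)\,g_{tt}+\big(A(z)+B(z)\big)g_{st}\,s^{2}+B(z)\,g_{ss}\,s^{4}\Big)
=\frac{e^{1/s}\,(g_{tt}+g_{st}s^{2})}{s^{4}\Delta}\big(A(z)+\varepsilon(\tau)\big),
\]
where $\varepsilon(\tau):=B(z)\,s^{2}(g_{st}+g_{ss}s^{2})/(g_{tt}+g_{st}s^{2})$. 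Since $g_{tt}>0$ on the compact set $\{s\in[-\delta/2,0]\}\times S^{1}$, there is $\tau_{1}\ge\tau_{0}$ such that for $\tau\ge\tau_{1}$ the leading factor is strictly positive and $\abs{\varepsilon(\tau)}\le C_{1}s(\tau)^{2}$ for a constant $C_{1}$; in particular $\varepsilon(\tau)\to 0$. Hence, for $\tau\ge\tau_{1}$, the sign of $\dot z(\tau)$ agrees with that of $A(z(\tau))+\varepsilon(\tau)$.

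\medskip
\emph{Trapping.} Fix $\eta>0$ small enough that $a^{*}+\eta<b^{*}-\eta$ and $b^{*}+\eta<a^{*}+2\pi-\eta$, let $\delta_{\eta}$ be the (positive) minimum of $\abs{A}$ over $[a^{*}+\eta,b^{*}-\eta]\cup[b^{*}+\eta,a^{*}+2\pi-\eta]$, and enlarge $\tau_{1}$ so that also $C_{1}s(\tau)^{2}<\delta_{\eta}$ for $\tau\ge\tau_{1}$. Then for $\tau\ge\tau_{1}$ one has $\dot z(\tau)>0$ whenever $z(\tau)\in[a^{*}+\eta,b^{*}-\eta]+2\pi\Z$ and $\dot z(\tau)<0$ whenever $z(\tau)\in[b^{*}+\eta,a^{*}+2\pi-\eta]+2\pi\Z$. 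Consequently $z$ can never cross a value $a^{*}+\eta+2\pi m$ from above, nor a value $b^{*}+\eta+2\pi n$ from below: at a first such crossing time $\tau^{*}\ge\tau_{1}$ the sign of $\dot z(\tau^{*})$ would be forced the wrong way. Choosing integers $m,n$ with $a^{*}+\eta+2\pi m<z(\tau_{1})<b^{*}+\eta+2\pi n$, it follows that $a^{*}+\eta+2\pi m<z(\tau)<b^{*}+\eta+2\pi n$ for all $\tau\ge\tau_{1}$. Together with boundedness on $[0,\tau_{1}]$, this bounds $z$.

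\medskip
\emph{Where the work is.} The whole argument hinges on the computation of $\dot z$ in Step 2; the one genuinely delicate point is the behavior of $z$ near the zeros $a^{*},b^{*}$ of $A$, where the error $\varepsilon$ can dominate $A(z)$ and even reverse the sign of $\dot z$ — at $z\equiv a^{*}$ that sign is that of $B(a^{*})g_{st}$, which is metric‑dependent and varies from sheet to sheet. What prevents this from letting $z$ drift off is precisely that to escape the trapping interval $z$ would have to re‑enter the interior of a monotonicity interval of $A$, i.e.\ reach a value $a^{*}+\eta+2\pi m$ or $b^{*}+\eta+2\pi n$, where for $\tau\ge\tau_{1}$ the sign of $\dot z$ is pinned and blocks the crossing. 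I note that this step uses that $A$ genuinely changes sign, i.e.\ that the constant $5/4$ is $<\sqrt2$; this, I expect, is exactly why that bound appears in the construction of $G$.
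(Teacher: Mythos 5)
Your proposal is correct and takes essentially the same approach as the paper: the same explicit computation of $\dot z$ along the flow, yielding $\dot z=(\text{positive factor})\cdot\bigl(\sqrt{2}\sin(z+\tfrac{\pi}{4})-\tfrac{5}{4}+O(s^{2})\bigr)$, followed by the same barrier/no-crossing argument using that $5/4<\sqrt{2}$. The only (cosmetic) difference is that you bound the $O(s^{2})$ error by the minimum of $\abs{A}$ on fixed subintervals between its zeros, whereas the paper absorbs it into the shifted constants $9/8$ and $11/8$ and forbids crossings of the resulting interval sets.
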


\begin{proof}
Let
\[
\begin{bmatrix}
A(s, t) & B(s, t) \\ B(s, t) & C(s, t)
\end{bmatrix}
\]
be the matrix of the dual metric to $g$ with respect to the coordinate basis
$\br{ds, dt}$ for $T^{*}(\R\times S^{1})$, and note positive definiteness tells us that 
$A(s, t)$ and $C(s, t)$ are positive for all $(s, t)\in\R\times S^{1}$.
Furthermore, since $\gamma(\tau)$ remains in a compact region for all $\tau\ge 0$, we can conclude
that the functions
$A(\tau):=A(s(\tau), t(\tau))$, $B(\tau):=B(s(\tau), t(\tau))$, and $C(\tau):=C(s(\tau), t(\tau))$
are bounded and that $A(\tau)$ and $C(\tau)$ are bounded away from zero
(or, equivalently that $A^{-1}(\tau)$ and $C^{-1}(\tau)$ are bounded).

From Lemma \ref{l:F-forward-time} and the definition \eqref{e:F-definition} of $F$ it follows
that $F(\gamma(\tau))=G(\gamma(\tau))$ for sufficiently large $\tau$.
For such values of $\tau$ we use \eqref{e:G-s}-\eqref{e:G-t}
with the boundedness of $A(\tau)$, $B(\tau)$, $C(\tau)$, and $A^{-1}(\tau)$
to compute
\begin{align*}
s'&=A(s, t) G_{s}(s, t)+B(s, t) G_{t}(s, t) \\
&=-s^{-2}e^{1/s}A(s, t)\bp{\sqrt{2}\sin(1/s+t+\tfrac{\pi}{4})-\tfrac{5}{4}+O(s^{2})}
\intertext{and}
\tl t'
&=t' \\
&=B(s, t)G_{s}(s, t)+C(s,t)G_{t}(s, t) \\
&=s^{-4}e^{1/s}A(s, t)O(s^{2})
\end{align*}
with $O(s^{2})$ denoting, as usual, a function $h$ for which $s^{-2}h(s, t)$ remains bounded on a
deleted neighborhood of $s=0$.
We then have that
\begin{align*}
z'
&=-s^{-2}s'+\tl t' \\
&=s^{-4}e^{1/s}A(s, t)\bp{\sqrt{2}\sin(1/s+ t+\tfrac{\pi}{4})-\tfrac{5}{4}+O(s^{2})} \\
&=s^{-4}e^{1/s}A(s, t)\bp{\sqrt{2}\sin(z+\tfrac{\pi}{4})-\tfrac{5}{4}+O(s^{2})}
\end{align*}
and so, for sufficiently large values of $\tau$ (and thus sufficiently small values of $s(\tau)$), we'll have that
\begin{equation}\label{e:z-inequality}
\sqrt{2}\sin(z(\tau)+\tfrac{\pi}{4})-\tfrac{11}{8}
\le
\bp{s(\tau)^{-4}e^{1/s(\tau)}A(\tau)}^{-1}z'(\tau)
\le
\sqrt{2}\sin(z(\tau)+\tfrac{\pi}{4})-\tfrac{9}{8}.
\end{equation}
We claim this lets us conclude that $z(\tau)$ is bounded.
Indeed, since $\frac{9}{8}\in(-\sqrt{2}, \sqrt{2})$,
the solution set to the inequality
\[
\sqrt{2}\sin(z+\tfrac{\pi}{4})-\tfrac{9}{8}<0
\]
is a countable union of intervals which is invariant under translation by $2\pi\Z$.
By \eqref{e:z-inequality}, $z(\tau)$ can't cross these intervals in the positive direction
once $\tau$ is sufficiently large for \eqref{e:z-inequality} to hold.
Similarly, since $\frac{11}{8}\in(-\sqrt{2}, \sqrt{2})$, $z(\tau)$ can't cross the intervals where
\[
\sqrt{2}\sin(z+\tfrac{\pi}{4})-\tfrac{11}{8}>0
\]
in the negative direction once $\tau$ is sufficiently large.
We conclude that $z(\tau)$ is bounded for $\tau\in[0, \infty)$.
\end{proof}

We now complete the proof of the main theorem of the section
\begin{proof}[Proof of Theorem \ref{t:function-construction}]
By construction and Lemma \ref{l:F-crit-set}, $F$ satisfies all required properties,
and it remains to show that the 
omega limit set of a solution to \eqref{e:gradient-flow-ode} is the circle $\br{0}\times S^{1}$.
Let $\gamma(\tau)=(s(\tau), t(\tau))$ be a solution to \eqref{e:gradient-flow-ode} and let
$\tl t:\R^{+}\to\R$ be a lift of $t:\R^{+}\to S^{1}$.
We have shown in Lemma \ref{l:z-bounded} above that the function
$z=1/s+\tl t$ is bounded on $[0, \infty)$.  Since we know from Lemma \ref{l:F-forward-time} that
$\lim_{\tau\to\infty}s(\tau)=0$ and since $s(\tau)<0$ for all $\tau\ge 0$,
we can conclude that $\lim_{\tau\to\infty}\frac{1}{s(\tau)}=-\infty$.
This in turn lets us conclude
$\tl t(\tau)$ approaches $+\infty$ as $\tau\to\infty$ or else $z$ would not be bounded.
By continuity, the equation $\tl t(\tau)=c$ has a solution $\tau_{c}$ for all $c\ge \tl t(0)$.
We then conclude that for any $\tau_{0}\in\R$ and any $t_{0}\in S^{1}$, there
exists a $\tau_{t_{0}}>\tau_{0}$ so that $t(\tau_{t_{0}})=t_{0}$.
This with the fact that $s(\tau)\to 0$ as $\tau\to\infty$ shows that
$\br{0}\times S^{1}$ is the omega limit set of $\gamma$.
\end{proof}

\section{Finite-energy cylinders and planes with tori as limit sets}\label{s:main-proof}
Here we prove our main theorem, Theorem \ref{t:main-theorem}, that is,
we construct examples of finite-energy cylinders and finite-energy planes
having tori as limit sets.  The constructions take place in an arbitrarily small
tubular neighborhood of a standard model of a transverse knot,
so we begin by recalling some basic facts about transverse knots
and explaining why this construction suffices to prove the main theorem.

Let $(M^{2n+1}, \xi=\ker\lambda)$ be a contact manifold.
An embedding $\gamma:S^{1}\to M$ is said to be a transverse knot if $\gamma$
is everywhere transverse to $\xi$ or equivalently, if $\lambda(\dot\gamma)$ is never zero.
Transverse knots exist in abundance in any contact manifold.  Indeed, by the well-known Darboux theorem
for contact structures, there exists a contactomorphism  ---
that is a diffeomorphism preserving the contact structure
---
between a neighborhood of any point in a contact manifold $(M^{2n+1}, \xi)$ and a neighborhood
of $0$ in $\R^{2n+1}=\br{(z, x_{i}, y_{i})}$ equipped with the contact structure
$\xi_{0}=\ker\lambda_{0}$ where $\lambda_{0}$ is the contact form
\[
\lambda_{0}=dz+\alpha_{n}
\]
with
\begin{equation}\label{e:alpha-defn}
\alpha_{n}=\sum_{i=1}^{n}x_{i}\,dy_{i}-y_{i}\,dx_{i}
\end{equation}
(see e.g.\ \cite[Theorem 2.24]{geiges}).
Since, for a given $k\in\Z\cap[1, n]$
and any constants $r>0$, $c_{i}$, $d_{i}\in\R$, circles of the form
\[
\text{$x_{k}^{2}+y_{k}^{2}=r^{2}$, and $x_{i}=c_{i}$, $y_{i}=d_{i}$, for $i\ne k$}
\]
are easily seen to be transverse to the contact structure,
we can conclude that transverse knots exist in every contact manifold and, indeed,
that transverse knots exist in any neighborhood of a given point in a contact manifold.

We next recall that one can use a Moser argument to prove a neighborhood theorem for transverse knots 
which tells us that there exists a contactomorphism between some neighborhood of any given
transverse knot and a neighborhood of $S^{1}\times\br{0}$ in
$S^{1}\times\R^{2n}=\br{(\theta, x_{i}, y_{i})}$
equipped with the contact structure
$\xi_{0}=\ker \lambda_{0}$
where
\begin{equation}\label{e:transverse-standard}
\lambda_{0}=d\theta+\alpha_{n}
\end{equation}
with $\alpha_{n}$ as defined in \eqref{e:alpha-defn} above
(see e.g.\ \cite[Theorem 2.32/Example 2.33]{geiges}).
We will refer to
$S^{1}\times\br{0}\subset (S^{1}\times\R^{2n}, \xi_{0})$
as the standard model of a transverse knot in $S^{1}\times\R^{2n}$.

Given the facts recalled in the previous two paragraphs, 
it suffices for the proof of our main theorem
to construct the desired finite-energy planes and cylinders
in any given neighborhood of the standard model of a transverse knot in $S^{1}\times\R^{2n}$.
Since $d\alpha_{n}=2\sum_{i=1}^{n}dx_{i}\wedge dy_{i}$ is a symplectic form on
$\R^{2n}$, $S^{1}\times\R^{2n}$ equipped with the contact form
\eqref{e:transverse-standard} has the structure of a prequantization space.
We can thus apply Theorems \ref{t:gradient-flow-hol-cylinders} and \ref{t:function-construction}
to construct a finite-energy cylinder having tori of periodic orbits as its limit sets.

\begin{theorem}
Let $r_{+}>r_{-}>0$.  Then there exists a smooth function
$F:S^{1}\times\R^{2n}$ and an almost complex structure
$J\in\J(S^{1}\times\R^{2n}, \xi_{0})$ so that passing through every point
$(\theta_{0}, p, z)\in S^{1}\times \R^{2(n-1)}\times \R^{2}$ with $\abs{z}\in (r_{-}, r_{+})$
is a finite-energy cylinder  for the data $(e^{F}\lambda, J)$
with limit sets equal to the union of tori
$S^{1}\times\br{p}\times\br{\abs{z}=r_{-}}\cup\br{\abs{z}=r_{+}}
\in S^{1}\times\R^{2(n-1)}\times\R^{2}$.
\end{theorem}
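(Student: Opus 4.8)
The plan is to exhibit these cylinders inside the prequantization structure on $(S^{1}\times\R^{2n},\lambda_{0}=d\theta+\alpha_{n})$ over $(\R^{2n},d\alpha_{n})$, combining Theorems \ref{t:gradient-flow-hol-cylinders} and \ref{t:function-construction}. I take $J=\tl j$ for the standard complex structure $j$ on $\R^{2n}$, so $j\in\J(\R^{2n},d\alpha_{n})$ and $g_{j}$ is the Euclidean metric up to a constant, in particular a product metric for $\R^{2n}=\R^{2(n-1)}\times\R^{2}$. Writing points as $(p,z)$ with $p\in\R^{2(n-1)}$, $z\in\R^{2}$, and $\rho=\abs z$, $\phi=\arg z$, I want a smooth $f\colon\R^{2n}\to\R$ depending only on $z$ such that, for a fixed $0<\delta<\tfrac12\min(r_{-},r_{+}-r_{-})$: on $\br{r_{-}+\delta\le\rho\le r_{+}-\delta}$, $f=\rho-r_{-}$; on $\br{\abs{\rho-r_{+}}\le\delta}$, $f=G_{\delta}(\rho-r_{+},\phi)+(r_{+}-r_{-})$; on $\br{\abs{\rho-r_{-}}\le\delta}$, $f=-G_{\delta}(\rho-r_{-},\phi)$; and on $\br{\rho<r_{-}-\delta}$ (which contains the origin, and on which the preceding collar formula reduces to $r_{-}-\abs z$), $f$ is a smooth radial cap $\psi(\abs z^{2})$ with $\psi'<0$ on $(0,\infty)$. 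Here $G_{\delta}(s,t):=F_{\delta}(s,t)+F_{\delta}(-s,t+\pi)$ is the function from the remark following Theorem \ref{t:function-construction}; since $G_{\delta}\equiv-\abs s$ for $\abs s\ge\delta$, the pieces agree with all derivatives on their overlaps, so $f$ is a well-defined smooth function. Its critical set is exactly $\br{\rho=r_{-}}\cup\br{\rho=r_{+}}\cup\br{z=0}$, and by the computation after \eqref{e:reeb-deformed} the $S^{1}$-fiber over each critical point is a periodic orbit of the Reeb field of $e^{\pi^{*}f}\lambda_{0}$, so the tori $S^{1}\times\br{p}\times\br{\rho=r_{\pm}}$ consist of periodic orbits.

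Next I would study the gradient flow of $f$ with respect to $g_{j}$ (the system \eqref{e:ode-gamma}--\eqref{e:ode-a} up to the factor $2\pi$). Because $f$ depends only on $z$ and $g_{j}$ is a product metric, the $\R^{2(n-1)}$-component is constant along each flow line, and because $\br{\rho=r_{\pm}}$ is critical, a flow line through a point with $\rho\in(r_{-},r_{+})$ stays in $\br{p=p_{0},\,r_{-}<\rho<r_{+}}$ for all time, hence is complete. In the middle region $\abs{\nabla f}$ is bounded below, so $\rho$ increases to $r_{+}-\delta$ in finite forward time and the flow enters the $r_{+}$-collar; since $f$ is nondecreasing along the flow and its middle values stay strictly below its value on $\br{\rho=r_{+}}$, the flow cannot return, and being confined to $\br{\rho<r_{+}}$ there — where $G_{\delta}$ restricts to $F_{\delta}$ — it coincides, after the affine substitution $s=\rho-r_{+}<0$, with a gradient flow line of $F_{\delta}$ for the restriction of $g_{j}$ (extended arbitrarily to $\R\times S^{1}$; Theorem \ref{t:function-construction}'s conclusion is local near the circle). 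Theorem \ref{t:function-construction} then gives that the $\omega$-limit set is the whole circle $\br{\rho=r_{+}}\times\br{p_{0}}$, and the symmetric argument applied to $-\nabla f$ near $\rho=r_{-}$ (using the $s>0$ branch of $G_{\delta}$ and the substitution $\sigma=r_{-}-\rho<0$) gives $\alpha$-limit set $\br{\rho=r_{-}}\times\br{p_{0}}$. Feeding such a flow line through a given $(p_{0},z_{0})$ with $\abs{z_{0}}\in(r_{-},r_{+})$ into Theorem \ref{t:gradient-flow-hol-cylinders}, and fixing the constant of integration in \eqref{e:ode-theta} so that $\theta$ passes through $\theta_{0}$, produces a finite-energy $J$-holomorphic cylinder $\tl u(s,t)=(a(s),\theta(s)+2\pi t,\gamma(s))$ for the data $(e^{\pi^{*}f}\lambda_{0},J)$, with $E(u)=2\pi\lim_{s\to\infty}e^{f(\gamma(s))}=2\pi e^{\,r_{+}-r_{-}}$ by \eqref{e:energy-formula}.

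It remains to identify the limit set. For each $s$ the loop $t\mapsto u(s,t)$ has image the fiber circle $S^{1}\times\br{p_{0}}\times\br{\gamma_{z}(s)}$, $\gamma_{z}$ the $z$-part of $\gamma$. As $s\to+\infty$, the proof of Theorem \ref{t:function-construction} shows $\gamma_{z}(s)\to\br{\rho=r_{+}}$ with its argument running to $+\infty$, so the loops $u(s_{k},\cdot)$ converge in $C^{\infty}$, along suitable subsequences, to each fiber over $\br{\rho=r_{+}}$ and to no others; likewise as $s\to-\infty$ they accumulate exactly on the fibers over $\br{\rho=r_{-}}$. Hence the limit set of $\tl u$ has image $S^{1}\times\br{p_{0}}\times\bp{\br{\rho=r_{-}}\cup\br{\rho=r_{+}}}$, the asserted union of two tori. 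I expect the crux to be the first step: arranging a single smooth function whose critical set is exactly these two circles (plus the innocuous fiber over the origin) while the gradient flow near each circle is genuinely the spiral flow of Theorem \ref{t:function-construction}. What makes this possible is that the collar models $\pm G_{\delta}$ coincide with $\mp\abs{\rho-r_{\pm}}$ away from the circles, so they glue onto the linear middle piece differing only by additive constants — which do not disturb the delicate $\omega$-limit behavior, whereas, as one checks directly, adding a genuinely linear term to $G_{\delta}$ would destroy it.
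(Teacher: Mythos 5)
Your proposal is correct and follows essentially the same route as the paper: glue collars modeled on $F_{\delta}$ around the two circles $\{|z|=r_{\pm}\}$, join them by a monotone radial function, cap off smoothly over the inner disk, use a product metric so that the $\R^{2(n-1)}$-component is frozen along the flow, and feed the resulting flow lines into Theorem \ref{t:gradient-flow-hol-cylinders}. The paper instead performs the gluing on $\R\times S^{1}$ (two reflected copies of $F_{1/4}$, constant equal to $-1$ for $\rho\le-1$) and transports it to $\R^{2}\setminus\{0\}$ by an explicit logarithmic-radial diffeomorphism, extending by that constant over the origin, but this differs from your polar-coordinate construction with the symmetrized collar function only cosmetically.
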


\begin{proof}
With $F_{\delta}$ a function with the properties stated in Theorem \ref{t:function-construction},
we consider a function $G:\R\times S^{1}\to\R$ defined by
\[
G(\rho, \phi)=
\begin{cases}
F_{1/4}(\rho, \phi) & \rho>-3/4 \\
-F_{1/4}(-\rho-1, \phi)-1 & \rho<-1/4
\end{cases}
\]
which defines a smooth function since $F_{1/4}(\rho, \phi)=\rho=-F_{1/4}(-\rho-1, \phi)-1$ for
$\rho\in [-3/4, -1/4]$.
For any initial condition $(\rho_{0}, \phi_{0})\in [-3/4, -1/4]\times S^{1}$ the
forward gradient flow of $G$ for any metric
agrees with that of $F_{1/4}$ and thus limits to $\br{0}\times S^{1}$.
Similarly, for any initial condition $(\rho_{0}, \phi_{0})\in [-3/4, -1/4]\times S^{1}$ the backward gradient flow
of $G$ for any metric
agrees with that of
$-F_{1/4}(-\rho-1, \phi)-1$
which is conjugated to the forward gradient flow of $F_{1/4}$ by reflection and translation
and thus limits in backward time to
$\br{-1}\times S^{1}$.

We consider the diffeomorphism 
$p:\R\times\R/2\pi\Z\to\R^{2}\setminus\br{0}$
defined by
\[
p(\rho, \phi)=(r_{+}(r_{+}/r_{-})^{\rho}\cos \phi, r_{+}(r_{+}/r_{-})^{\rho}\sin \phi)
\]
which maps the circles
$\br{-1}\times S^{1}$ and $\br{0}\times S^{1}$ to
the circles $\abs{z}=r_{-}$ and $\abs{z}=r_{+}$ respectively. 
We then define a function $F:\R^{2n}\to\R$ by
\[
F(x_{1}, y_{1}, \dots, x_{n}, y_{n})=
\begin{cases}
G(p^{-1}(x_{n}, y_{n})) & (x_{n}, y_{n})\ne 0 \\
-1 & (x_{n}, y_{n})= 0
\end{cases}
\]
which defines a smooth function since $G(\rho, \phi)=-F_{1/4}(-\rho-1, \phi)-1$ for $\rho\le -1$ and
hence $G(p^{-1}(z))=-1$ for $\abs{z}<r_{-}$.
We observe that for any metric on $\R^{2n}\approx\R^{2(n-1)}\times\R^{2}$
for which the last $T\R^{2}$ is everywhere orthogonal to $T\R^{2(n-1)}$
we will have that
$\nabla F=(0, \nabla G)\in \R^{2(n-1)}\times\R^{2}$
and thus the gradient flow of $F$ for initial points
$(p, z)\in\R^{2(n-1)}\times\R^{2}$ with $\abs{z}\in(r_{-}, r_{+})$
will have the circles $\abs{z}\in\br{r_{-}, r_{+}}$ as limit sets.

Choosing then an almost complex structure
$J\in\J(\R^{2n}, d\alpha_{n})$
on $\R^{2(n-1)}\times \R^{2}$ which preserves the two factors
(for example the standard $J_{0}$ defined by $J_{0}\partial_{x_{i}}=\partial_{y_{i}}$),
we know from Theorem \ref{t:gradient-flow-hol-cylinders} that gradient flow lines
of $F$ on $\R^{2n}$ with respect to the metric $d\beta(\cdot, J\cdot)$ lift to
finite-energy cylinders in $\R\times S^{1}\times\R^{2n}$ for the data
$(e^{\pi^{*}F}\lambda, J)$.
Since the nonconstant gradient flow lines for the function $F$ will have
the circles $\abs{z}=r_{\pm}$ as limit sets, the corresponding finite-energy cylinders in
$S^{1}\times\R^{2n}$ will have the tori $S^{1}\times\br{p}\times\br{\abs{z}=r_{\pm}}$ as limit
sets.
\end{proof}

Using Theorem \ref{t:gradient-flow-hol-cylinders} to construct finite-energy a plane
with a torus as a limit set is somewhat more subtle since the theorem only
tells us how to construct a cylinder from a gradient flow line.
To construct a plane we will use the theorem to construct a cylinder with a removable singularity.

\begin{theorem}\label{t:plane-construction}
Let $r_{0}>0$.  Then there exists a smooth function
$\tl F:S^{1}\times\R^{2n}$ and an almost complex structure
$J\in\J(S^{1}\times\R^{2n}, \xi_{0})$ so that passing through every point
$(\theta_{0}, 0, z)\in S^{1}\times \R^{2(n-1)}\times \R^{2}$ with $\abs{z}<r_{0}$
is a finite-energy plane  for the data $(e^{\tl F}\lambda_{0}, J)$
with limit set equal to the embedded torus
$S^{1}\times\br{0}\times\br{\abs{z}=r_{0}}\in S^{1}\times\R^{2(n-1)}\times\R^{2}$.
\end{theorem}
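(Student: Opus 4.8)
The plan is to produce the plane as a finite-energy $\tl J$-holomorphic cylinder one of whose two punctures has vanishing mass; by the removability of mass-zero punctures recalled after \eqref{e:mass}, such a cylinder extends pseudoholomorphically across that puncture, and the filled-in point becomes the center of the plane. As in the proof of the previous theorem the whole construction can be confined to the slice $S^{1}\times\{0\}\times\R^{2}\subset S^{1}\times\R^{2(n-1)}\times\R^{2}$: I would take $\tl F$ to depend only on $z=(x_{n},y_{n})$, take $J\in\J(S^{1}\times\R^{2n},\xi_{0})$ to preserve the splitting $\R^{2(n-1)}\times\R^{2}$ (say, standard on the first factor), and build the plane inside that slice, on which $\lambda_{0}$ restricts to $d\theta+(x_{n}\,dy_{n}-y_{n}\,dx_{n})$. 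Thus it is enough to treat $n=1$.

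First I would make the prequantization structure near the core circle explicit in log-polar coordinates $z=e^{\rho}e^{i\phi}$: this identifies $\R^{2}\setminus\{0\}$ with $\R_{\rho}\times S^{1}_{\phi}$, sends $\{|z|=r_{0}\}$ to $\{\rho=\rho_{0}:=\log r_{0}\}$ and the core $z=0$ to the end $\rho\to-\infty$, and converts $x\,dy-y\,dx$ into $e^{2\rho}\,d\phi$; so $S^{1}\times(\R^{2}\setminus\{0\})$ with the contact form $d\theta+(x\,dy-y\,dx)$ is a prequantization space over the exact symplectic manifold $W=(\R_{\rho}\times S^{1}_{\phi},\ \omega=d(e^{2\rho}\,d\phi))$. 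Using Theorem \ref{t:function-construction} I would then choose a function $\tl F$ on $W$ which, near the end $\rho\to-\infty$, blows down to $-\infty$ at the rate for which $e^{\tl F}$ extends smoothly across $z=0$, and which near $\rho=\rho_{0}$ agrees with a rescaling of $F_{\delta}$ so that $\{\rho\ge\rho_{0}\}$ is exactly its critical set. Then $\{\rho=\rho_{0}\}$ is a critical circle, so in the deformed form $e^{\tl F}\lambda_{0}$ the torus $S^{1}\times\{|z|=r_{0}\}$ is foliated by closed Reeb orbits of a common period; and by Theorem \ref{t:function-construction} every nontrivial gradient flow line $\gamma$ of $\tl F$ (for the metric of any compatible $j$) runs in forward time to $\{\rho=\rho_{0}\}$ and in backward time out the end $\rho\to-\infty$, along which $\tl F(\gamma(s))\to-\infty$.

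Feeding such a $\gamma$ into Theorem \ref{t:gradient-flow-hol-cylinders} gives a finite-energy $\tl J$-holomorphic cylinder $\tl u$. At its $s\to+\infty$ end $\gamma$ limits to $\{\rho=\rho_{0}\}$, so exactly as in the previous theorem the loops $u(s,\cdot)$ sweep out and have as limit set the torus $S^{1}\times\{|z|=r_{0}\}$, with mass $2\pi\lim_{s\to\infty}e^{\tl F(\gamma(s))}>0$ by \eqref{e:energy-formula}. Applying \eqref{e:energy-formula} at the other puncture, where $\tl F(\gamma(s))\to-\infty$, shows that puncture has mass $0$; hence it is removable, and $\tl u$ extends to a finite-energy plane still having $S^{1}\times\{|z|=r_{0}\}$ as its limit set. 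Translating in $\theta$ and letting $\gamma$ vary then produces such a plane through every $(\theta_{0},0,z)$ with $|z|<r_{0}$, which, together with Darboux's theorem and the transverse-knot neighborhood theorem \eqref{e:transverse-standard}, also finishes the proof of Theorem \ref{t:main-theorem}. The hard part will not be the cylinder but its capping: one must genuinely arrange that $e^{\tl F}\lambda_{0}$ together with $\tl J$ extends to honest contact data across the filled-in center lying over $z=0$ — this is exactly the ``situation'' one has to engineer, tying together the choice of $W$ (the core point removed), the blow-down rate of $\tl F$ at the missing end, and the choice of $J$ on $\xi$ near the core, and it is in this bookkeeping near the core that the bulk of the work lies.
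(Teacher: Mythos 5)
Your high-level strategy --- realize the plane as a finite-energy cylinder one of whose punctures is removable --- is indeed the paper's strategy, and your treatment of the positive end is fine. But the construction fails at exactly the step you defer to ``bookkeeping near the core,'' and it fails for a structural reason, not a technical one. You take the prequantization fiber to be the $\theta$-circle, with base the punctured normal plane $\R_{\rho}\times S^{1}_{\phi}$. Every cylinder produced by Theorem \ref{t:gradient-flow-hol-cylinders} then has loops $t\mapsto u(s,t)=(\theta(s)+2\pi t,\gamma(s))$ wrapping once around the $\theta$-direction; as $s\to-\infty$ these loops converge to the embedded transverse knot $S^{1}\times\br{0}$, not to a point, so $u$ cannot extend continuously over the puncture no matter how $\tl F$ is chosen. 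Worse, the two requirements you need are mutually exclusive in this framework: by the computation underlying \eqref{e:energy-formula} the negative puncture has mass $2\pi\lim_{s\to-\infty}e^{\tl F(\gamma(s))}$, so removability forces $\tl F\to-\infty$ at the core, while extending $e^{\tl F}\lambda_{0}$ to a contact form on all of $S^{1}\times\R^{2n}$ forces $e^{\tl F}$ to stay bounded away from $0$ there (a conformal factor tending to $0$ kills $\lambda\wedge d\lambda^{n}$). So with this choice of prequantization structure no function $\tl F$ works.

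The paper's resolution is to use the \emph{other} circle as the fiber: it exhibits $S^{1}\times\R^{2(n-1)}\times(\R^{2}\setminus\br{0})$ as a prequantization space whose fiber is the meridian angle $\phi$ of the normal $\R^{2}$-factor and whose base is $W=S^{1}_{\theta}\times\R^{2(n-1)}\times\R_{\rho}$ carrying $\beta=e^{-2\rho}(d\theta+\alpha_{n-1})$, the contactomorphism $\Phi$ satisfying $\Phi^{*}\lambda_{0}=e^{2\rho}(d\phi+\pi^{*}\beta)$. Now the cylinders wind around meridians, which genuinely shrink to points on the knot as the gradient flow line runs out the end $\rho\to-\infty$, so the map extends over the puncture (verified there by the explicit formula \eqref{e:map-near-zero} rather than by the abstract removability theorem). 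The conformal factor $e^{2\rho}$ is precisely what reconciles your two incompatible demands: choosing $G$ with $G=2\rho$ near the core makes $e^{G}(d\phi+\pi^{*}\beta)=\Phi^{*}\lambda_{0}$ there, so $\tl F=(G-2\rho)\circ\Phi^{-1}$ extends by $0$ and $e^{\tl F}\lambda_{0}$ is an honest contact form, while simultaneously the mass $2\pi\lim e^{G}=2\pi\lim e^{2\rho}$ vanishes. One must still verify (Lemma \ref{l:plane-construction}) that the $S^{1}$-invariant lift of a compatible $j$ on $W$ pushes forward to a $J$ that extends smoothly over the $x_{n}=y_{n}=0$ locus; this is an additional computation your proposal does not address.
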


The strategy of the proof is to consider a set which is contactomorphic to the complement of the
$x_{n}=y_{n}=0$ locus of a standard model of a transverse knot and show that this can be given the
structure of a prequantization space with respect to the angular variable on
$\br{(x_{n}, y_{n})}\setminus\br{0}$.  We then use Theorems \ref{t:gradient-flow-hol-cylinders}
and \ref{t:function-construction} to construct a pseudoholomorphic cylinder which has a removable
puncture mapped to the the $x_{n}=y_{n}=0$ locus.

We begin with a computational lemma.
\begin{lemma}\label{l:plane-construction}
Consider $W:=S^{1}\times\R^{2(n-1)}\times\R=\br{(\theta, x_{i}, y_{i}, \rho)}$ equipped with the
$1$-form
\begin{equation}\label{e:beta-planes}
\beta=e^{-2\rho}\bp{d\theta+\alpha_{n-1}}
\end{equation}
with $\alpha_{n-1}$ as defined in \eqref{e:alpha-defn}.  Then:
\begin{itemize}
\item $d\beta$ is a symplectic form on $W$.
\item Consider the corresponding prequantization space $(S^{1}\times W, \lambda:=d\phi+\pi^{*}\beta)$
over $W$.  With $\lambda_{0}$ as defined in \eqref{e:transverse-standard},
the map
\[
\Phi:(S^{1}\times W, \xi=\ker\lambda)\to
(S^{1}\times \R^{2(n-1)}\times(\R^{2}\setminus\br{0}), \xi_{0}=\ker\lambda_{0})
\]
defined by
\begin{equation}\label{e:Phi-definition}
\Phi(\phi, \theta, x_{i}, y_{i}, \rho)=(\theta, x_{i}, y_{i}, e^{\rho}\cos\phi, e^{\rho}\sin\phi)
\end{equation}
is a contactomorphism and, in particular, 
\begin{equation}\label{e:lambda-0-pullback}
\Phi^{*}\lambda_{0}=e^{2\rho}\lambda.
\end{equation}
\item For any choice of $j_{0}\in\J(\R^{2(n-1)}, d\alpha_{n-1})$ the endomorphism
$j_{1}\in\End(TW)$ defined by
\begin{equation}\label{e:j1-definition}
\begin{gathered}
j_{1}(\theta, p, \rho)\partial_{\rho}=-e^{2\rho}\dth,
\qquad
j_{1}(\theta, p, \rho)\dth=e^{-2\rho}\partial_{\rho}, \text{ and} \\
j_{1}(\theta, p, \rho)v=j_{0}(p)v-\alpha_{n-1}(j_{0}(p)v)\,\dth+\alpha_{n-1}(v)e^{-2\rho}\,\partial_{\rho} \text{ for $v\in T\R^{2(n-1)}$}.
\end{gathered}
\end{equation}
is an almost complex structure on $W$ compatible with $d\beta$, i.e.\ $j_{1}\in\J(W, d\beta)$, and
the corresponding metric $g_{j_{1}}:=d\beta\circ(I\times j_{1})$ on $W$ is given by
\begin{equation}\label{e:metric-plane}
g_{j_{1}}
= 2 \,d\rho\otimes d\rho
+2e^{-4\rho}(d\theta+\alpha_{n-1})\otimes\bp{d\theta+\alpha_{n-1}}
+e^{-2\rho}d\alpha_{n-1}\circ (I\times j_{0}).
\end{equation}
\item Let $\tl j_{1}\in\J(S^{1}\times W, \xi)$ be the $S^{1}$-invariant complex structure on
$\xi$ determined by $j_{1}$ as defined above, i.e.\ $\tl j$ is the complex structure characterized by
$\wt{j_{1}v}=\tl j_{1}\tl v$ with
\begin{equation}\label{e:lift-planes-proof}
\tl v=-\beta(v)\partial_{\phi}+v=-e^{-2\rho}\bp{d\theta(v)+\alpha_{n-1}(v)}\partial_{\phi}+v
\end{equation}
the lift of $v$ to an $S^{1}$-invariant section of $\xi$ from \eqref{e:horizontal-lift}.
Then $\Phi_{*}\tl j_{1}=d\Phi\circ \tl j_{1}\circ d\Phi^{-1}\in\J(S^{1}\times \R^{2(n-1)}\times\R^{2}\setminus\br{0}, \xi_{0})$ admits a smooth extension to a compatible
$J\in\J(S^{1}\times\R^{2n}, \xi_{0})$.
\end{itemize}
\end{lemma}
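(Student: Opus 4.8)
The plan is to verify the four bulleted claims of Lemma \ref{l:plane-construction} essentially in order, treating the first three as direct computations and reserving the real work for the smooth-extension statement in the fourth bullet. For the first bullet, one computes $d\beta = e^{-2\rho}\bigl(-2\,d\rho\wedge(d\theta+\alpha_{n-1}) + d\alpha_{n-1}\bigr)$ and checks that $d\beta^{n}$ is a volume form on $W$; since $d\alpha_{n-1}^{n-1}$ is a volume form on $\R^{2(n-1)}$ and the wedge with $-2e^{-2\rho}\,d\rho\wedge(d\theta+\alpha_{n-1})$ supplies the remaining two transverse directions, nondegeneracy is immediate. For the second bullet, one simply pulls back $\lambda_{0} = d\theta+\alpha_{n}$ under $\Phi$, using $x_n\,dy_n - y_n\,dx_n = e^{2\rho}\,d\phi$ for $(x_n,y_n)=(e^{\rho}\cos\phi, e^{\rho}\sin\phi)$, to get $\Phi^{*}\lambda_{0} = d\theta+\alpha_{n-1} + e^{2\rho}\,d\phi = e^{2\rho}(d\phi + e^{-2\rho}(d\theta+\alpha_{n-1})) = e^{2\rho}\lambda$; since $\Phi$ is manifestly a diffeomorphism onto its image and $\Phi^{*}\lambda_0$ is a positive multiple of $\lambda$, it preserves $\ker\lambda$, so $\Phi$ is a contactomorphism.

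For the third bullet, I would first check $j_{1}^{2} = -\mathrm{Id}$ by evaluating on the frame $\{\partial_\rho, \partial_\theta, v\}$ (the cross-terms involving $\alpha_{n-1}$ cancel because $j_0$ squares to $-\mathrm{Id}$ on $T\R^{2(n-1)}$ and because $\alpha_{n-1}(\partial_\rho)=\alpha_{n-1}(\partial_\theta)=0$), then compute $d\beta(\cdot, j_1\cdot)$ on the same frame to read off \eqref{e:metric-plane} and observe it is symmetric and positive definite — the $d\rho\otimes d\rho$ and $e^{-4\rho}(d\theta+\alpha_{n-1})^{\otimes 2}$ blocks are positive, and $e^{-2\rho}d\alpha_{n-1}\circ(I\times j_0)$ is positive on $T\R^{2(n-1)}$ by compatibility of $j_0$ with $d\alpha_{n-1}$. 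This is routine but slightly tedious bookkeeping with the connection-form terms.

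The main obstacle is the fourth bullet: showing that $\Phi_{*}\tilde j_1$, a priori defined only on $S^{1}\times\R^{2(n-1)}\times(\R^2\setminus\{0\})$, extends smoothly across the locus $\{x_n=y_n=0\}$ to an element of $\J(S^1\times\R^{2n},\xi_0)$. The strategy here is to recognize that the singularity is only apparent: the metric \eqref{e:metric-plane}, expressed in the $(x_n,y_n)$-coordinates via $\rho = \tfrac12\log(x_n^2+y_n^2)$, should be seen to extend smoothly over the origin. Concretely, $2\,d\rho\otimes d\rho + 2e^{-4\rho}d\phi\otimes d\phi$ — which is the $\R^2$-part after absorbing the $(d\theta+\alpha_{n-1})$-shift — has the form, in polar coordinates $r=e^\rho$, of $\tfrac{2}{r^2}(dr\otimes dr + dr'\otimes...)$; one checks that with the correct reading of the lifted complex structure this is a smooth (indeed, a conformally rescaled flat) structure near $r=0$, because the factor $e^{-2\rho}=1/(x_n^2+y_n^2)$ that appears is exactly cancelled by the way $\xi_0$ sits inside $T(S^1\times\R^{2n})$ near the knot. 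More precisely, I would pull everything back to $\xi_0$ using $\Phi^{*}\lambda_0 = e^{2\rho}\lambda$, so that the conformal factor relating the two contact forms exactly compensates the $e^{\pm2\rho}$ weights in $j_1$, and the resulting complex structure on $\xi_0$ near $\{x_n=y_n=0\}$ is a bounded, smoothly-varying perturbation of the standard $J_0$ (which it should in fact equal, or be homotopic to, along the knot). The cleanest way to finish is to write $J$ explicitly on a neighborhood of the knot in the $(\theta,x_i,y_i)$-coordinates, confirm it agrees with $\Phi_*\tilde j_1$ off the knot, is smooth across it, squares to $-\mathrm{Id}$ on $\xi_0$, and is $d\lambda_0$-compatible — the last point following automatically since compatibility is an open condition preserved under the conformal change $\lambda \mapsto e^{2\rho}\lambda$ as recalled in Section \ref{s:background}. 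The delicate point to watch is that $d\Phi$ itself degenerates at $r=0$ (it is the polar-coordinate map), so one cannot naively push forward; the extension must be verified in a chart where $\{x_n=y_n=0\}$ is a smooth submanifold, and the key computation is that the apparent $1/(x_n^2+y_n^2)$ blow-up in the pushed-forward endomorphism is spurious once one accounts for the matching blow-up in the identification $\xi \cong \Phi^*\xi_0$.
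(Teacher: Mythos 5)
Your treatment of the first three bullets matches the paper's: the volume-form computation for $d\beta^{n}$, the pullback identity $\Phi^{*}(x_{n}\,dy_{n}-y_{n}\,dx_{n})=e^{2\rho}\,d\phi$ yielding \eqref{e:lambda-0-pullback}, and the frame-by-frame verification that $j_{1}^{2}=-I$ and that $d\beta\circ(I\times j_{1})$ equals \eqref{e:metric-plane} are all exactly what the paper does, and these parts of your proposal are fine.

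The gap is in the fourth bullet, which you yourself identify as the main obstacle but then do not actually prove. The paper's argument is a single concrete computation: evaluate $\Phi_{*}\tl j_{1}$ on the global smooth frame $\br{-\alpha_{n}(\partial_{x_{i}})\dth+\partial_{x_{i}},\ -\alpha_{n}(\partial_{y_{i}})\dth+\partial_{y_{i}}}$ of $\xi_{0}$ and observe that the images (for instance $-\alpha_{n}(\partial_{x_{n}})\dth+\partial_{x_{n}}\mapsto -x_{n}\,\dth+\partial_{y_{n}}$, and $-\alpha_{n}(v)\dth+v\mapsto -\alpha_{n-1}(j_{0}v)\dth+j_{0}v$ for $v\in T\R^{2(n-1)}$) have coefficients polynomial in $x_{n},y_{n}$, hence extend. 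You name this as ``the cleanest way to finish'' but never carry it out, and the heuristic you substitute for it is not sound as stated. First, the metric \eqref{e:metric-plane} does \emph{not} extend smoothly over the origin in $(x_{n},y_{n})$-coordinates: already $2\,d\rho\otimes d\rho=2(x_{n}^{2}+y_{n}^{2})^{-2}(x_{n}\,dx_{n}+y_{n}\,dy_{n})\otimes(x_{n}\,dx_{n}+y_{n}\,dy_{n})$ blows up as $(x_{n},y_{n})\to 0$, so ``the metric should be seen to extend smoothly over the origin'' is false (and in any case irrelevant: the object that must extend is the complex structure on $\xi_{0}$, not $g_{j_{1}}$ on $W$). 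Second, you replace the $d\theta$ in \eqref{e:metric-plane} by $d\phi$ when describing ``the $\R^{2}$-part''; but $\theta$ is a base coordinate of $W$ mapping to the $S^{1}$-factor of the target, while the angular coordinate of the $(x_{n},y_{n})$-plane is the prequantization fiber $\phi$, which does not appear in $g_{j_{1}}$ at all. The correct cancellation is the one visible in the frame computation: $\tl j_{1}\wt{\partial_{\rho}}=\partial_{\phi}-e^{2\rho}\dth$ pushes forward to $(x_{n}\partial_{y_{n}}-y_{n}\partial_{x_{n}})-(x_{n}^{2}+y_{n}^{2})\dth$ while $\Phi_{*}\partial_{\rho}=x_{n}\partial_{x_{n}}+y_{n}\partial_{y_{n}}$, and it is the common factor of $r=e^{\rho}$ in these two vector fields (not a conformal factor in the metric) that makes the resulting endomorphism of $\xi_{0}$ smooth across $x_{n}=y_{n}=0$. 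Without this explicit verification the fourth bullet remains unproved.
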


Assuming for the moment the results of the lemma, we proceed with the proof of Theorem \ref{t:plane-construction}.

\begin{proof}[Proof of Theorem \ref{t:plane-construction}]
Given $r_{0}>0$ we define a smooth function
$G:W\to\R$ by
\[
G(\theta, p, \rho)=2(F_{1}(\rho-\log r_{0}, \theta)+\log r_{0})
\]
with $F_{1}$ a function satisfying the properties given in
Theorem \ref{t:function-construction} with $\delta=1$.
We note that as a result of the definition and of Theorem \ref{t:function-construction},
$G(\theta, p, \rho)=2\rho$ for $\rho<\log r_{0}-1$ and
$G(\theta, p, \rho)=2\log r_{0}$ for $\rho\ge\log r_{0}$.
Moreover, since $\alpha_{n-1}=0$ along the $p=(x_{1}, y_{1}, \dots, x_{n-1}, y_{n-1})=0$ locus,
we have that
\begin{equation}\label{e:G-gradient}
\nabla G(\theta, 0, \rho)=\frac{1}{2}\bp{G_{\rho}\partial_{\rho}+G_{\theta}e^{4\rho}\dth}
=\drh F_{1}(\rho-\log r_{0}, \theta)\partial_{\rho}+\dth F_{1}(\rho-\log r_{0}, \theta)e^{4\rho}\dth
\end{equation}
where $\nabla G$ is the gradient with respect to the metric \eqref{e:metric-plane} on $W$.
Therefore, for any initial point $w_{0}=(\theta_{0}, 0, \rho_{0})\in S^{1}\times\R^{2(n-1)}\times\R$
with $\rho_{0}<\log r_{0}$,
the
solution $\gamma(s)$ to the equation
\begin{equation}\label{e:ode-gamma-G}
\dot\gamma(s)=2\pi\nabla G(\gamma(s))
\end{equation}
stays within the embedded cylinder
$S^{1}\times\br{0}\times\R\subset S^{1}\times\R^{2(n-1)}\times\R$
and agrees with a gradient flow for the function
$(\rho, \theta)\in \R\times S^{1}\mapsto 2(F_{1}(\rho-\log r_{0}, \theta)+\log r_{0})$ for an
appropriate metric on the cylinder.
Thus the flow exists in forward time and has the circle
$S^{1}\times\br{0}\times\br{\log r_{0}}\in S^{1}\times\R^{2(n-1)}\times\R$ as its omega limit set.
Meanwhile, using the fact that 
$F_{1}(\rho-\log r_{0}, \theta)=\rho-\log r_{0}$ for $\rho<\log r_{0}-1$,
we have from \eqref{e:G-gradient} that
\[
\nabla G(\theta, 0, \rho)=\partial_{\rho} \qquad \text{ for $\rho<\log r_{0}-1$}
\]
and thus that the solution $\gamma$ to \eqref{e:ode-gamma-G} is given by
$\gamma(s)=(\theta_{1}, 0, 2\pi s+s_{1})$
for sufficiently small $s$ with
$\theta_{1}\in S^{1}$ and $s_{1}\in\R$ appropriate constants.
Thus the flow exists
indefinitely in backward time as well.
Applying Theorem \ref{t:gradient-flow-hol-cylinders} we know that the map
$\tl u(s, t)=(a(s),\phi(s)+2\pi t, \gamma(s))\in\R\times S^{1}\times W$
where $a:\R\to\R$ and $\phi:\R\to S^{1}$ satisfy
$\dot a(s)=2\pi e^{G(\gamma(s))}$ and $\dot\phi(s)=2\pi \beta(\nabla G(\gamma(s)))$
is a finite-energy cylinder
for the data $(e^{\pi^{*}_{W}G}\lambda, \tl j_{1})$
with energy
$2\pi \lim_{s\to\infty}e^{G(\gamma(s))}=2\pi e^{2\log r_{0}}=2\pi  r_{0}^{2}$ and the torus
$S^{1}\times S^{1}\times \br{0}\times\br{\log r_{0}}$ as its limit set.
Moreover, since $G(\theta, p, \rho)=2\rho$ and $\nabla G(\theta, p, \rho)=\partial_{\rho}$
for $\rho<\log r_{0}-1$, 
there exist constants $a_{1}\in\R$, $\theta_{1}\in S^{1}$, $s_{1}\in\R$, and $t_{1}\in S^{1}$ so that
\begin{equation}\label{e:map-near-zero}
\tl u(s, t)
=(\pi e^{4\pi s}+a_{1},t_{1}+2\pi t, \theta_{1},0, 2\pi s+s_{1}) \in\R\times S^{1}\times S^{1}\times\R^{2(n-1)}\times\R
=\R\times S^{1} \times W.
\end{equation}
for sufficiently negative $s$.

We next show that the map
$\tl v:=(a, \Phi\circ u):\R\times\S^{1}\to\R\times S^{1}\times\R^{2n}$,
with $\Phi:(S^{1}\times W, \xi) \to (S^{1}\times \R^{2n}, \xi_{0})$ the contactomorphism
defined in \eqref{e:Phi-definition},
has a removable singularity at $-\infty$ and thus extends to a pseudoholomorphic plane.
We first note that \eqref{e:lambda-0-pullback} gives us
\[
[\Phi^{-1}]^{*}\bp{e^{(\pi_{W}^{*}G)}\lambda}=e^{(\pi_{W}^{*}G-2\rho)\circ\Phi^{-1}}\lambda_{0}
\]
and, since $G(\theta, p, \rho)=2\rho$ for $\rho<\log r_{0}-1$, we'll have that
$\tl F:=(\pi_{W}^{*}G-2\rho)\circ\Phi^{-1}$ extends to a smooth function on
$S^{1}\times\R^{2n}$
and thus $[\Phi^{-1}]^{*}\bp{e^{(\pi_{W}^{*}G)}\lambda}=e^{\tl F}\lambda_{0}$
defines a contact form on $S^{1}\times\R^{2n}$.
Since, by Lemma \ref{l:plane-construction}, the pushed-forward complex
structure $\Phi_{*}\tl j_{1}=d\Phi\circ j_{1}\circ d\Phi^{-1}$ has a smooth extension to a
$J\in\J(S^{1}\times\R^{2n}, \xi_{0})$, it suffices to show that the map
$\tl v=(a, \Phi\circ u)$ has a smooth extension.  To see this,
we use the definition \eqref{e:Phi-definition} with \eqref{e:map-near-zero} to compute
that
\[
\tl v=(a, \Phi\circ u)=(\pi e^{4\pi s}+a_{1},
\theta_{0},0, e^{2\pi s+s_{1}}\cos(t_{1}+2\pi t), e^{2\pi s+s_{1}}\sin(t_{1}+2\pi t))\in
\R\times S^{1}\times \R^{2(n-1)}\times\R^{2}
\]
for sufficiently negative $s$.
Precomposing with the biholomorphic map
$\psi:\C\setminus\br{0}\to\R\times S^{1}=\C/i\Z$ defined by
\[
\psi(z)=(\log\abs{z}/2\pi, \arg{z}/2\pi)
\]
we find that
\[
\tl v(\psi(z))=(\pi\abs{z}^{2}+a_{1}, \theta_{0}, 0, e^{s_{1}+it_{1}}z)
\in
\R\times S^{1}\times \R^{2(n-1)}\times\R^{2}(\approx \C)
\]
which clearly extends smoothly over $z=0$.
We note moreover that the limit set
$S^{1}\times S^{1}\times \br{0}\times\br{\log r_{0}}\subset S^{1}\times S^{1}\times\R^{2(n-1)}\times\R$
gets mapped by $\Phi$ to the embedded torus
$S^{1}\times\br{0}\times\br{\abs{z}= r_{0}}\in S^{1}\times\R^{2(n-1)}\times\R^{2}$,
while the set of points
$(\phi, \theta, 0, \rho)\in S^{1}\times S^{1}\times \R^{2(n-1)}\times\R$
with $\rho<\log r_{0}$ 
gets mapped by $\Phi$ to the set
$(\theta, 0, z)\in S^{1}\times\R^{2(n-1)}\times\R^{2}$ with $\abs{z}\in(0, r_{0})$.
Thus, since we were able, by appropriate choice of initial point of the flow of
$\nabla G$, to construct a pseudoholomorphic cylinder for the data
$(e^{\pi_{W}G}\lambda, \tl j_{1})$
through any point $(\phi, \theta, 0, \rho)\in S^{1}\times S^{1}\times \R^{2(n-1)}\times\R$,
we can construct a pseudoholomorphic plane for the data
$(e^{\tl F}\lambda_{0}, J)$
through any point
$(\theta, 0, z)\in S^{1}\times\R^{2(n-1)}\times\R^{2}$ with $\abs{z}<r_{0}$
as desired.
This completes the proof.
\end{proof}

\begin{remark}
If we choose an initial point in the proof of theorem to be a point
$(\theta_{0}, p_{0}, \rho_{0})\in S^{1}\times\R^{2(n-1)}\times\R$ with $p_{0}\ne 0$, one can still
construct a finite-energy plane from the resulting flow line with a limit set consisting of more than a single orbit,
although the limit set may be more complicated than a torus.
Indeed,
for a given choice of $j_{0}\in \J(\R^{2(n-1)}, d\alpha_{n-1})$, we let
$g_{j_{0}}=d\alpha_{n-1}(\cdot, j_{0})$ denote the associated metric and observe that
\[
\alpha_{n-1}(p)(v)=\frac{1}{2}d\alpha_{n-1}(p, v)=-\frac{1}{2}d\alpha_{n-1}(v, p)=\frac{1}{2}g_{j_{0}}(v, j_{0}p).
\]
Using this, one can compute the gradient of the function $G$ with respect to the metric
\eqref{e:metric-plane} to be given by
\begin{align*}
\nabla G(\theta, p, \rho)
&=2^{-1}\bp{
G_{\rho}\,\partial_{\rho}+e^{2\rho}G_{\theta}\bp{e^{2\rho}+\abs{p}_{j_{0}}}\,\dth
- e^{2\rho}G_{\theta}j_{0}p
} \\
&=
\drh F_{1}(\rho-\log r_{0}, \theta)\,\partial_{\rho}
+\dth F_{1}(\rho-\log r_{0}, \theta)e^{2\rho}\bp{e^{2\rho}+\abs{p}_{j_{0}}}\,\dth
-\dth F_{1}(\rho-\log r_{0}, \theta) e^{2\rho} j_{0}p.
\end{align*}
with $\abs{\cdot}_{j_{0}}$ the norm with respect to the metric $g_{j_{0}}$.
We note the the $\R^{2(n-1)}$-component of $\nabla G(\theta, p, \rho)$ is always orthogonal
to $p$.  If the almost complex structure $j_{0}$ is constant, we can thus conclude that
$\abs{p}_{j_{0}}$ is constant along the flow.
Thus, $\rho$- and $\theta$-components of the gradient flow for $G$ agree with a gradient flow for a
shift of the function $F$ on $\R\times S^1$ for an appropriate metric
(specifically the metric
$g=d\rho\otimes d\rho+e^{-2\rho}\bp{e^{2\rho}+c^{2}}^{-1}d\theta\otimes d\theta$ with
$c^{2}$ equal to the constant value of $\abs{p}_{j_{0}}$ along the flow line).
From this one can argue that under the projection
$S^{1}\times\R^{2(n-1)}\times\R^{2}\to S^{1}\times\br{0}\times\R^{2}$
the limit set of any plane obtained as a lift of a gradient flow of the function $G$ in our theorem
will project to a torus.
\end{remark}

Finally, to complete the proof of Theorem \ref{t:plane-construction},
we give the proof of Lemma \ref{l:plane-construction} above.

\begin{proof}[Proof of Lemma \ref{l:plane-construction}]
We first show that $d\beta$ is a symplectic form.  Computing, we have that
\begin{equation}\label{e:d-beta}
d\beta
=e^{-2\rho}(-2\,d\rho\wedge d\theta-2\,d\rho\wedge\alpha_{n-1}+d\alpha_{n-1})
\end{equation}
and hence
\begin{align*}
d\beta^{n}
&=e^{-2n\rho}(-2\,d\rho\wedge d\theta-2\,d\rho\wedge\alpha_{n-1})\wedge(d\alpha_{n-1})^{n-1} \\
&=-2e^{-2n\rho}\,d\rho\wedge d\theta\wedge(d\alpha_{n-1})^{n-1}
\end{align*}
which is nowhere vanishing on $W=S^{1}\times \R^{2(n-1)}\times\R$.
Hence $d\beta$ is a symplectic form on $W$ as claimed.

Next, we show that the map
$\Phi:(W, \xi=\ker\lambda)
\to (S^{1}\times\R^{2(n-1)}\times(\R^{2}\setminus\br{0}), \xi_{0}=\ker\lambda_{0})$
defined in \eqref{e:Phi-definition}
is a contactomorphism satisfying \eqref{e:lambda-0-pullback}.
From the definition \eqref{e:Phi-definition} of the map,
it's clear that $\Phi$ is a diffeomorphism and that
\[
\Phi^{*}d\theta=d\theta \qquad \Phi^{*}dx_{i}=dx_{i} \qquad \Phi^{*}dy_{i}=dy_{i}
\]
for $i\in\Z\cap[1, n-1]$, while a straightforward computation shows that
\begin{equation}\label{e:Phi-pullback}
\Phi^{*}(x_{n}\,dy_{n}-y_{n}\,dx_{n})=e^{2\rho}\,d\phi \qquad \Phi^{*}(x_{n}\,dx_{n}+y_{n}\,dy_{n})=e^{2\rho}\,d\rho.
\end{equation}
Computing then gives
\begin{align*}
\Phi^{*}\lambda_{0}
&=d\theta+\alpha_{n-1}+\Phi^{*}(x_{n}\,dy_{n}-y_{n}\,dx_{n}) \\
&=d\theta+\alpha_{n-1}+e^{2\rho}\,d\phi \\
&=e^{2\rho}\lambda
\end{align*}
which shows that $\Phi$ is a contactomorphism and establishes
\eqref{e:lambda-0-pullback} as claimed.

We next address the third point.
The fact that $j_{1}^{2}\partial_{\rho}=-\partial_{\rho}$ and $j_{1}^{2}\dth=-\dth$
is immediate from the definition \eqref{e:j1-definition}.  Meanwhile for $v\in T(\R^{2(n-1)})$, we use 
\eqref{e:j1-definition} twice with $j_{0}^{2}v=-v$ to compute
\begin{align*}
j_{1}^{2}v
&=j_{1}\bp{j_{0}v-\alpha_{n-1}(j_{0}v)\,\dth+\alpha_{n-1}(v)e^{-2\rho}\,\partial_{\rho}} \\
&=j_{1}(j_{0}v)-\alpha_{n-1}(j_{0}v)j_{1}\dth+\alpha_{n-1}(v)e^{-2\rho}j_{1}\partial_{\rho} \\
&=j_{0}^{2}v-\alpha_{n-1}(j_{0}^{2}v)\,\dth+\alpha_{n-1}(j_{0}v)e^{-2\rho}\,\partial_{\rho} \\
&\hskip.25in -\alpha_{n-1}(j_{0}v)(e^{-2\rho}\,\drh)+\alpha_{n-1}(v)e^{-2\rho}(-e^{2\rho}\dth) \\
&=-v
\end{align*}
which shows that $j_{1}$ is an almost complex structure on $W$.  To check compatibility of
$j_{1}$ with $d\beta$ we compute from \eqref{e:j1-definition} that
\begin{align*}
d\rho\circ j_{1}
&=e^{-2\rho}\bp{d\theta+\alpha_{n-1}} \\
d\theta\circ j_{1}
&=-e^{2\rho}\,d\rho-\alpha_{n-1}\circ j_{0}\circ d\pi_{\R^{2(n-1)}}. \\
dx_{i}\circ j_{1}&=dx_{i}\circ j_{0}\circ d\pi_{\R^{2(n-1)}} \\
dy_{i}\circ j_{1}&=dy_{i}\circ j_{0}\circ d\pi_{\R^{2(n-1)}}
\end{align*}
which with \eqref{e:d-beta} gives us
\[
d\beta\circ(I\times j_{1})
= 2 \,d\rho\otimes d\rho
+2e^{-4\rho}(d\theta+\alpha_{n-1})\otimes\bp{d\theta+\alpha_{n-1}}
+e^{-2\rho}d\alpha_{n-1}\circ (I\times j_{0}).
\]
as claimed.
By the assumption that $j_{0}$ is compatible with $d\alpha_{n-1}$, this is clearly
symmetric and positive definite, and thus $j_{1}\in\J(W, d\beta)$ as claimed.

Finally, we show that $\Phi_{*}\tl j_{1}$ has a smooth extension to
a compatible complex structure $J\in\J(S^{1}\times\R^{2n}, \xi_{0})$.
The contact structure $\xi_{0}=\ker\lambda_{0}$ is spanned by the smooth sections
\[
-\alpha_{n}(\partial_{x_{i}})\dth+\partial_{x_{i}} \qquad -\alpha_{n}(\partial_{y_{i}})\dth+\partial_{y_{i}}
\]
so it suffices to check that
$\Phi_{*}\tl j_{1}$ times each of these sections has a smooth continuation.
We first observe that from the definition \eqref{e:Phi-definition} of $\Phi$ we immediately have
\begin{equation}\label{e:Phi-pushforward-1}
\Phi_{*}\dth=\dth \qquad \Phi_{*}\partial_{x_{i}}=\partial_{x_{i}} \qquad  \Phi_{*}\partial_{y_{i}}=\partial_{y_{i}}
\end{equation}
for $i$ between $1$ and $n-1$, while \eqref{e:Phi-pullback} give us
\begin{equation}\label{e:Phi-pushforward-2}
\Phi_{*}\partial_{\rho}=x_{n}\,\partial_{x_{n}}+y_{n}\,\partial_{y_{n}}
\qquad
\Phi_{*}\partial_{\phi}=x_{n}\,\partial_{y_{n}}-y_{n}\,\partial_{x_{n}}.
\end{equation}
Thus,
for $v\in T(\R^{2(n-1)})=\operatorname{span}\br{\partial_{x_{i}}, \partial_{y_{i}}}_{i=1}^{n-1}$,
a straightforward computation using that $d\theta(v)=0$ along with \eqref{e:lift-planes-proof}
and \eqref{e:Phi-pushforward-1}
shows that
\[
-\alpha_{n}(v)\dth+v=\Phi_{*}(-\alpha_{n-1}(v)\wt{\dth}+\tl v).
\]
Computing further with this,
the definition \eqref{e:j1-definition} of $j_{1}$, and $\tl j_{1}\tl v=\wt{j_{1}v}$
then shows that
\begin{align*}
(\Phi_{*}\tl j_{1})(-\alpha_{n}(v)\dth+v)
&=(\Phi_{*}\tl j_{1})\Phi_{*}(-\alpha_{n-1}(v)\wt{\dth}+\tl v) \\
&=\Phi_{*}( -\alpha_{n-1}(v)\wt{j_{1}\dth}+\wt{j_{1}v}) \\
&=\Phi_{*}(-e^{-2\rho}\alpha_{n-1}(v)\wt{\drh}+\wt{j_{0}v}-\alpha_{n-1}(j_{0}v)\wt{\dth}+\alpha_{n-1}(v)e^{-2\rho}\wt{\drh}) \\
&=\Phi_{*}(-\alpha_{n-1}(j_{0}v)\wt{\dth}+\wt{j_{0}v}) \\
&=-\alpha_{n-1}(j_{0}v)\dth+j_{0}v
\end{align*}
which clearly extends smoothly over the $x_{n}=y_{n}=0$ locus since there is
no $x_{n}$- or $y_{n}$-dependence.
Meanwhile,
a straightforward computation using
\eqref{e:Phi-definition} and
\eqref{e:Phi-pushforward-2}
shows that
\[
\partial_{x_{n}}=\Phi_{*}\bp{e^{-\rho}\bp{\cos\phi\,\drh-\sin\phi\,\dph}} \quad\text{ and }\qquad \partial_{y_{n}}=\Phi_{*}\bp{e^{-\rho}\bp{\sin\phi\,\drh+\cos\phi\,\dph}}
\]
and using this with \eqref{e:Phi-definition}, \eqref{e:Phi-pushforward-1},
and \eqref{e:lift-planes-proof} shows that
\[
-\alpha_{n}(\partial_{x_{n}})\dth+\partial_{x_{n}}
=y_{n}\,\dth+\partial_{x_{n}}
=\Phi_{*}(e^{\rho}\sin\phi\,\wt\dth+e^{-\rho}\cos\phi\,\wt\partial_{\rho})
\]
and
\[
-\alpha_{n}(\partial_{y_{n}})\dth+\partial_{y_{n}}
=-x_{n}\,\dth+\partial_{y_{n}}
=\Phi_{*}(-e^{\rho}\cos\phi\,\wt\dth+e^{-\rho}\sin\phi\,\wt\partial_{\rho}).
\]
Computing further using the definition \eqref{e:j1-definition} of $j_{1}$ with $\tl j_{1}\tl v=\wt{j_{1}v}$
shows that
\[
\Phi_{*}\tl j_{1}(-\alpha_{n}(\partial_{x_{n}})\dth+\partial_{x_{n}})
=-\alpha_{n}(\partial_{y_{n}})\dth+\partial_{y_{n}}
=-x_{n}\,\dth+\partial_{y_{n}}
\]
and
\[
\Phi_{*}\tl j_{1}(-\alpha_{n}(\partial_{y_{n}})\dth+\partial_{y_{n}})
=\alpha_{n}(\partial_{x_{n}})\dth-\partial_{x_{n}}
=-y_{n}\,\dth-\partial_{x_{n}}
\]
which also extend smoothly over $x_{n}=y_{n}=0$.
This completes the proof.
\end{proof}

\bibliographystyle{../hplain5}
\bibliography{nonunique-bib}

\end{document}